\documentclass{amsart}
\pdfoutput=1
\input cyracc.def

\usepackage{tikz-cd}
\usepackage[vcentermath]{youngtab}
\usepackage{genyoungtabtikz}
\usepackage{amsxtra}
\usepackage{amsmath}%
\usepackage{amsfonts}
\usepackage{amsthm}
\usepackage{amssymb}%
\usepackage{graphicx}
\usepackage[all]{xy}
\usepackage{amscd}
\usepackage[utf8]{inputenc}
\usepackage[english]{babel}
\usepackage{amstext} 
\usepackage{array,tabu}   
\newcolumntype{C}{>{$}p{4cm}<{$}}
\usepackage{hyperref}
\usepackage{booktabs}

\setlength{\textwidth}{\paperwidth}
\addtolength{\textwidth}{-2.2in}
\calclayout            


\newcommand{\lr}{\langle}
\newcommand{\rr}{\rangle}
\newcommand{\maxideal}{\mathfrak{m}}
\newcommand{\C}{\mathbb{C}}
\newcommand{\N}{\mathbb{N}}
\newcommand{\Z}{\mathbb{Z}}

\newcommand{\X}{\mathbb{C}^2}
\newcommand{\Hil}{H}
\newcommand{\HilbCn}{H^{[n]}}
\newcommand{\Hilbn}{H^{[n]}}
\newcommand{\Bn}{B^{[n]}}
\newcommand{\Hilnr}{H^{[n,n+r]}}
\newcommand{\Bnnr}{B^{[n,n+r]}}
\newcommand{\Xstar}{Y_0}

\newcommand{\xen}{H^{[n]}}
\newcommand{\aj}{\mathrm{HCh}}
\newcommand{\Grrm}{\mathrm{Gr}_r\left(\C^m\right)}
\newcommand{\sqt}{\sqrt{t}}
\newcommand{\Yng}{\Delta}
\newcommand{\bxnoIn}{\square}

\makeatletter
\def\moverlay{\mathpalette\mov@rlay}
\def\mov@rlay#1#2{\leavevmode\vtop{%
		\baselineskip\z@skip \lineskiplimit-\maxdimen
		\ialign{\hfil$\m@th#1##$\hfil\cr#2\crcr}}}
\newcommand{\charfusion}[3][\mathord]{
	#1{\ifx#1\mathop\vphantom{#2}\fi
		\mathpalette\mov@rlay{#2\cr#3}
	}
	\ifx#1\mathop\expandafter\displaylimits\fi}
\makeatother

\newcommand{\bigcupast}{\charfusion[\mathop]{\bigcup}{\ast}}
\theoremstyle{plain}
\newtheorem{theorem}{Theorem}

\newtheorem{corollary}[theorem]{Corollary}
\newtheorem{definition}[theorem]{Definition}
\newtheorem{example}[theorem]{Example}
\newtheorem{lemma}[theorem]{Lemma}

\newtheorem{proposition}[theorem]{Proposition}
\newtheorem{remark}[theorem]{Remark}

\makeatletter
\DeclareRobustCommand\widecheck[1]{{\mathpalette\@widecheck{#1}}}
\def\@widecheck#1#2{%
	\setbox\z@\hbox{\m@th$#1#2$}%
	\setbox\tw@\hbox{\m@th$#1%
		\widehat{%
			\vrule\@width\z@\@height\ht\z@
			\vrule\@height\z@\@width\wd\z@}$}%
	\dp\tw@-\ht\z@
	\@tempdima\ht\z@ \advance\@tempdima2\ht\tw@ \divide\@tempdima\thr@@
	\setbox\tw@\hbox{%
		\raise\@tempdima\hbox{\scalebox{1}[-1]{\lower\@tempdima\box
				\tw@}}}%
	{\ooalign{\box\tw@ \cr \box\z@}}}
\makeatother

\newcommand\scalemath[2]{\scalebox{#1}{\mbox{\ensuremath{\displaystyle #2}}}}


\definecolor{per}{rgb}{0,0.3,0.6}
\definecolor{per2}{rgb}{0,0.5,0.6}
\definecolor{per3}{rgb}{0.1,0.2,0.5}
\hypersetup{
	bookmarks=true,         
	unicode=false,          
	pdftoolbar=true,        
	pdfmenubar=true,        
	pdffitwindow=false,     
	pdfstartview={FitH},    
	pdftitle={My title},    
	pdfauthor={Author},     
	pdfsubject={Subject},   
	pdfcreator={Creator},   
	pdfproducer={Producer}, 
	pdfkeywords={keyword1, key2, key3}, 
	pdfnewwindow=true,      
	colorlinks=true,       
	linkcolor=per2,          
	citecolor=per3,        
	filecolor=per3,      
	urlcolor=per3           
}


\begin{document}
\title[Refined Hilbert schemes,
  E-polynomials, and the number of generators of
  ideals]{Refined Hilbert schemes,
  E-polynomials, and the number of generators of finite colength
  ideals in the plane}
	\author{Yi-Ning HSIAO}
	\author{Andras Szenes}
	\address{Section de mathématiques, Université de Genève}
	\email{Yi-Ning.Hsiao@unige.ch}
	\address{Section de mathématiques, Université de Genève}
	\email{Andras.Szenes@unige.ch}
	
	\begin{abstract}
          The study of the stratification associated to the number of
          generators of the ideals in the punctual Hilbert scheme of
          points on the affine plain goes back to the '70s.  In this
          paper, we present an elegant formula for the E-polynomials
          of these strata.
	\end{abstract}
	\maketitle
	\setcounter{section}{-1}
	\section{Introduction}
	The study of the topology of the Hilbert scheme of points on the
	affine plane has brought a wealth of results in several branches of
	mathematics, such as  geometric
	representation theory, theory of symmetric polynomials, singularities, symplectic geometry and in the
	enumerative geometry in two dimension
	 (\cite{Cheahcohomology,Go1,haiman, Ia, MVRshortproof, MLlect, HNlecture, NHHeisenberg, homfly}). 
	In this article, we study the strata of
	the Hilbert scheme of points on the complex plane associated to the number of generators of the elements of this space considered as	ideals.
	
	Let $X=\C^2$ be the affine plane and denote by $\xen$ the Hilbert scheme of $n$ points
	on $X$; this is a space parameterizing colength-$n$ ideals
	of the polynomial ring $R=\C[x,y]$:
	\[ \xen=\{ I\trianglelefteq R~|~ \dim_{\C}R/I=n\}.\] 
	The space $\xen$ is naturally endowed with the structure of a
	smooth $2n$-dimension complex variety (\cite{Foga}), and a universal
	sequence of R-modules:
	\[  I\to R\to R/I \]
	over $I\in \Hilbn$.  
	
	Counted with multiplicities, the support of an ideal $I\in\xen$ is $n$
	points in the plane, i.e. an element of $S^n \X$, the $n$th symmetric
	product of $\X$. We will represent the
	elements of $S^n\X$ as formal sums of points in $\X$, and the
	resulting morphism, the Hilbert-Chow morphism, will be denoted by  $ \aj:\xen\to S^n\X $.

	The \emph{Briançon variety} or \emph{the punctual Hilbert scheme}
	$B^{[n]}$ is the subvariety of $\Hilbn$ which collects all ideals
	supported at $(0,0)$:
	\[B^{[n]}=\{ I\trianglelefteq R~|~ \dim_{\C}R/I=n,~
	\aj(I)=n\cdot(0,0)\}.\] The variety $B^{[n]}$ is a deformation retract of $\Hilbn$. It is
	$(n-1)$-dimensional, compact and irreducible. $B^{[n]}$ is, however, not
	smooth: it is non-singular only for $n=1,2$
	(\cite{Brian}).
	
	Given an ideal $I\in \Hilbn$, there are distinct points $p_1,\dots, p_k\in X$ such that $I$ may be expressed as the intersection of ideals \begin{equation}\label{IntersectionFormOfI}
	I=I_{p_1}\cap\dots\cap I_{p_k},
	\end{equation} where $I_{p_i}$ is supported at $\{p_i\}$ for each $i=1,\dots ,k$. 
	We will write $I_0$ for $I_p$ with $p=(0,0)$. 
	Let 
	$$\sigma\left(I\right)=  \dim_\C \left( R/I_0\right)$$
	be  the multiplicity of $I$ at $(0,0)$; with this notation then $I_0$ is an element of $B^{[\sigma(I)]}$.
	
	Denote by $\maxideal:=\lr x, y\rr$  the maximal ideal of $R$ at $(0,0)$, and introduce another $\mathbb{N}$-valued function on $\Hilbn$:
	the minimal number of generators of $I_0$:
	\begin{align*}
	\mu\left( I\right)=\dim_{\C} \left( I_0/\maxideal I_0\right).
	\end{align*}
	Indeed, by Nakayama's Lemma, every basis of $I_0/\maxideal I_0\simeq
	\C^{\mu(I)}$ lifts to a minimum set of generators of $I_0$
	considered as an $R$-module.  This function is a classical
        invariant studied by A. Iarrobino in  \cite{Ia} (See $\S$2).
	
	An important set of examples of ideals in $\Hilbn$ are the ideals
	generated by monomials. In this case, those monomials that are not
	contained in $I$ form a basis 
		\[\left\{(p,q)\in \N^2 \;\vrule\; x^py^q\notin I  \right\}\]
	of the $n$-dimensional space
	$R/I$.
	\begin{figure}[h]
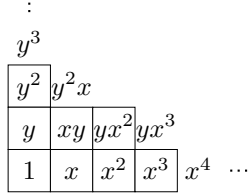

		\vspace*{-0.5cm}
		\Yboxdim{16pt}
		\gyoung(:\vdts,:<y^3>,<y^2>:<{\scalemath{1}{y^2x}}>,y<xy><{ \mbox{$\scalemath{1}{yx^2}$}}>:<{\scalemath{1}{yx^3}}>,1x<x^2><x^3>:<x^4>:\hdts)\vspace*{-0.2cm}
		\caption{The diagram corresponding to the ideal $\lr y^3, y^2x,yx^3,x^3\rr$.}
	\end{figure}
	This defines a one-to-one correspondence
	between monomial ideals in $\Hilbn$ and Young diagrams of $n\in \N$
		, which are in bijection with partitions of $n$:   
	We denote by $\Pi(n)$ 
	the set of all $n$-codimensional monomial ideals of $R$.
	Note that given a diagram representing a monomial ideal $I$, the value
	of $\mu(I)$ is the number of "concave corners" of the diagram:
	\begin{figure}[h]
		$\begin{array}{cc}
		\Yvcentermath1 I:{\Yboxdim{10pt}\gyoung(:\star,;;;:\star)}
		,& \Yvcentermath1 J:{\Yboxdim{10pt}
			\gyoung(:\star,;:\star,;;:\star)}\in H^{[3]}\\
		\lr x^3,y\rr& \lr x^2,xy,y^2\rr
		\end{array}$ (where $\star\Leftrightarrow$ generator) 
		\caption{\mbox{The monomial ideals $I,J$
				have $\mu\left(I\right)=2$ and $\mu\left(J\right)=3$, respectively.}}
	\end{figure}
	
	In the present paper, we study the geometric and topological invariants of strata associated to $\mu$ and $\sigma$.
	
	We will focus on the \emph{E-polynomial} (or \emph{the
		Hodge-Deligne polynomial}) whose  motivic properties  facilitate its
	calculation (\cite{Del1,Del2,HR}).
	Let $Z$ be a complex algebraic variety. P. Deligne established the existence of two filtrations on the $j$-th cohomology group of $Z$: the weight filtration $W_*$
	\[0=W_{-1}\subseteq W_0\subseteq\dots \subseteq W_{2j}=H^{j}(Z)\]
	and the Hodge filtration  $F^*$
	\[H^{j}(Z)=F^0\supseteq F^1\supseteq\dots \supseteq F^m\supseteq F^{2j}=0 \]
	such that, for each $l$, the filtration induced by $F$  on the graded piece $\mathrm{gr}_lW:=W_l/W_{l-1}$
	endows $\mathrm{gr}_lW$ with a pure Hodge structure of weight $l$.
	One can define a mixed Hodge structure on the compactly supported cohomology $H_{c}^*(Z)$ as well (\cite{Fulton}). 
	
	We define the compactly supported mixed Hodge polynomial of $Z$ to be the generating function of the compactly supported mixed Hodge numbers $h^{p,q;j}_c(Z):=\dim_\mathbb{C} \mathrm{gr}_p F \left(\mathrm{gr}_{p+q}W \left(H_c^j(Z)\right)\right)$:
	\[H_c(Z; u,v,s):=\sum_{p,q,j} h^{p,q;j}_c(Z) u^p v^q s^j.\]  
	For a connected smooth variety $Z$ of complex dimension $d$, the cohomological pairing 
	\begin{equation}\label{Eq:PD}
	H^k(Z) \times H_c^{2d-k}(Z) \to H^{2d}(Z).
	\end{equation} is a morphism of mixed Hodge structures. Then Poincaré duality implies the equality of mixed Hodge numbers:
	\begin{equation}\label{Eq:MHnumberscptMHnumbers}
	h^{p,q;j}_c(Z) = h^{d-p,d-q;2d-j}(Z)
	\end{equation} where $h^{p,q;j}:=\dim_\mathbb{C} \mathrm{gr}_p F\left( \mathrm{gr}_{p+q}W (H^{j}(Z))\right)$ are the mixed Hodge numbers of $Z$. Equivalently, if $H(Z; u,v,s):=\sum_{p,q,j} h^{p,q;j}(Z) u^p v^q s^j$ is the mixed Hodge polynomial of $Z$, then we have
	\begin{equation}
	H_c(Z; u,v,s)=u^dv^ds^{2d}H(Z; u^{-1},v^{-1},s^{-1}).
	\end{equation}
	The specialization $H_c(Z; u,v,s)|_{u=1,v=1}$ is the compactly supported Poincaré polynomial of $Z$. 
	Moreover, if $Z$ is a connected smooth variety,
	then by (\ref{Eq:MHnumberscptMHnumbers}), we can express the compactly supported Poincaré polynomial in terms of $P\left(Z;s\right):= \sum_j \dim_\C H^j(Z)s^j$, the Poincaré polynomial of $Z$ as follows:
	\begin{equation}
	H_c(Z; 1,1,s)={P}\left(Z ; s^{-1}\right)s^{2d}.
	\end{equation}
	
	The \emph{E-polynomial} or \emph{the Hodge-Deligne polynomial} of $Z$ is defined to be the compactly supported mixed Hodge polynomial  specialized at $s=-1$
	\[E(Z; u, v):=\sum_{p,g,j} h^{p,q;j}_c(Z)(-1)^j u^p v^q.\] 
	
	{\bf Properties of the E-polynomial:}
	\begin{itemize}
		\item {\bf Additivity:} If a complex algebraic variety  $Z$ is represented as a disjoint union of locally Zariski closed subsets $Z=\cup_{i=1}^n Z_i$, then \[E(Z;u,v)=E(Z_1; u,v)+\dots E(Z_n;u,v) .\]
		\item {\bf Factorization on fibrations:}
		If $f: Z \to B$ is a Zariski locally trivial fibration of complex algebraic varieties with fiber $F$ over a closed
		point, then \[E(Z ) = E(B) \cdot E(F).\]
		In particular, if $B,F$ are complex algebraic varieties, then 
		\[E(B \times F ) = E(B) \cdot E(F ).\]
		\item 
		The specialization at $u=v=1$ gives the topological Euler characteristic 
		\[E(Z; 1,1)=\sum_{p,g,j} (-1)^jh^{p,q;j}_c(Z)=\chi(Z).\]
		When $Z$ is smooth and projective, the specialization
		\begin{align*}
		E\left(Z;-u,-v\right)=\sum_{p,q}\dim H^q(Z,\Omega^p) u^pv^q= \sum_{p,q}h^{p,q} u^pv^q
		\end{align*} agrees with the Hodge polynomial of $Z$, and 
		\begin{align*}
		E(Z; -y,1)=\sum_{p,g,j} (-1)^jh^{p,q;j}(Z)y^p=\chi_y(Z)
		\end{align*} is the Hirzebruch $\chi_y$-genus of $Z$.
	\end{itemize}
	\begin{example}
		The E-polynomial of $\C$ is
                $E\left(\C;u,v\right)=uv$. Applying the factorization
                property, we have $E\left(\C^n;u,v\right)=(uv)^n$, and
                the  $E\left(\C^n\setminus{0};u,v\right)=(uv)^n-1$.
	In particular, the E-polynomial of $\C^*$ is $uv-1$.
	\end{example}
	{\bf Notation}: 
	Throughout the article, $t$ will be a variable of degree 2.
	If the mixed Hodge numbers $h^{p,q;j}(Z)$ of an algebraic variety $Z$
	vanish except when $p=q$, then the E-polynomial $E(Z;u,v)$ may be
	written as a polynomial in $t=uv$.  This condition holds for the
	Hilbert schemes $\Hilbn$, $\Bn$ and the refined Hilbert scheme $\Hilnr$ to be introduced later.  
	Thus we	adopt the simplified notation $E\left(Z;t\right)$ in this article for the
	E-polynomial:
	\begin{equation}
	E\left(Z;t\right) :=E\left(Z;\sqrt{t},\sqrt{t}\right)= \sum_{p,q,j}h_c^{p,q;j}\left(Z\right)u^p v^q s^j \mid_{u,v= \sqrt{t} \text{ and } s=-1 }
	\end{equation}

	The calculations of the E-polynomials of Hilbert schemes of points on smooth
	surfaces goes back to the works of L. G\"{o}ttsche and J. Cheah ( \cite{Cheahcohomology,Cheah,Gobook}). A version of their result
	is the following.
	
	\begin{theorem}[J. Cheah  \cite{Cheahcohomology,Cheah}]
		The generating function of the E-polynomials of a smooth surface $S$ has the form
		\begin{equation}
		\sum_{n=0}^\infty E\left(S^{[n]};u,v\right)s^n=
		\prod_{d=1}^\infty \prod_{p,q} \left(\frac{1}{1-u^{p+d-1}v^{q+d-1}s^d}\right)^{e_{p,q}(S)},
		\end{equation} where $e_{p,g}:=\sum_k (-1)^k h^{p,q,k}_c(Z)$.
	\end{theorem}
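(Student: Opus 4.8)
The plan is to deduce the global formula from two essentially independent ingredients: the E-polynomial of the punctual Hilbert scheme $\Bn$ of $\X$, and the multiplicative behaviour of $E$ under the stratification furnished by the Hilbert--Chow morphism $\aj$. First I would settle the punctual case. The torus $(\C^*)^2$ acting on $\X$ induces an action on $\Bn$ whose fixed points are the monomial ideals, indexed by partitions $\lambda\vdash n$; a Bia{\l}ynicki-Birula argument yields an affine paving of $\Bn$ with exactly one cell per $\lambda$, of complex dimension $n-\ell(\lambda)$, where $\ell(\lambda)$ is the number of parts. Since $\Bn$ is compact and paved by affine cells, its mixed Hodge structure is pure of Tate type and concentrated in even cohomological degree, so $E(\Bn;t)=\sum_{\lambda\vdash n}t^{\,n-\ell(\lambda)}$ with $t=uv$. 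Encoding a partition by its multiplicities $m_d=\#\{\text{parts equal to }d\}$, so that $|\lambda|=\sum_d d\,m_d$ and $\ell(\lambda)=\sum_d m_d$, this resums to
\[ \sum_{n\ge 0}E(\Bn;t)\,s^n=\prod_{d=1}^{\infty}\frac{1}{1-t^{d-1}s^d}. \]

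Next I would exploit the decomposition \eqref{IntersectionFormOfI}: every $I\in S^{[n]}$ is the intersection of local ideals $I_{p_i}$ at distinct points $p_i$, with colengths summing to $n$. Stratifying the symmetric product $S^{(n)}$ by the multiplicity type of the support and pulling back along $\aj$ exhibits each stratum of $S^{[n]}$ as a fibration over a configuration space of coloured points of $S$, whose fibre is a product of punctual Hilbert schemes (one factor $B^{[j]}$ for each point of multiplicity $j$), taken modulo the permutations of equal-multiplicity points. Because $S$ is smooth, the local model of $\aj$ at each point coincides with that of $\X$; this is what makes the fibration Zariski locally trivial, so that the additivity and the factorization property of $E$ apply stratum by stratum.

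The transition from this local data to the global generating function is precisely the plethystic exponential (power-structure) operation: summing over all multiplicity types converts the quotients by the symmetric groups into the symmetric-product formula, giving
\[ \sum_{n\ge 0}\bigl[S^{[n]}\bigr]s^n=\Bigl(\sum_{m\ge 0}\bigl[\Bn\bigr]s^m\Bigr)^{[S]}. \]
On E-polynomials the power operation sends a factor $(1-u^a v^b s^d)^{-1}$ to $\prod_{p,q}(1-u^{a+p}v^{b+q}s^d)^{-e_{p,q}(S)}$, and it is multiplicative in such factors (the virtual $e_{p,q}(S)$, carrying the signs from odd cohomology, are handled correctly by this formula). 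Feeding in the punctual answer from the first paragraph, where the $d$-th factor has $(a,b)=(d-1,d-1)$, reproduces exactly
\[ \prod_{d=1}^{\infty}\prod_{p,q}\Bigl(\frac{1}{1-u^{p+d-1}v^{q+d-1}s^d}\Bigr)^{e_{p,q}(S)}, \]
which is the claim.

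I expect the genuine difficulty to lie in the second and third steps rather than in the combinatorics. One must verify carefully that the Hilbert--Chow fibration over each stratum is Zariski locally trivial, so that $E$ multiplies, and one must justify the precise shift $u^{p+d-1}v^{q+d-1}$ in the power-structure formula; equivalently, one must control how the punctual cohomology tensors against the cohomology of $S$ through the symmetric-group quotients. By comparison, the partition bookkeeping of the first paragraph and the bare additivity of $E$ are routine.
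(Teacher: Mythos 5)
The paper does not actually prove this statement: it is quoted as background, with the proof deferred to Cheah \cite{Cheahcohomology,Cheah}, and only its specialization to $S=\C^2$ (formula \eqref{Formula:EpolyHn}) is used in the sequel. So the only meaningful comparison is with the cited source, and your outline is, in substance, that proof. Cheah's argument in \cite{Cheah} stratifies $S^{[n]}$ by the multiplicity type of the Hilbert--Chow image, uses additivity of $E$ and its multiplicativity on Zariski-locally trivial fibrations, reduces the fibers to products of punctual Hilbert schemes --- computed, as you do, by a Bia{\l}ynicki-Birula/Ellingsrud--Str{\o}mme paving with one cell of dimension $n-\ell(\lambda)$ per partition $\lambda\vdash n$, giving $\prod_{d\ge 1}(1-t^{d-1}s^d)^{-1}$ --- and handles the symmetric-group quotients by a Macdonald-type formula; your ``power structure'' phrasing is the later formalization of exactly this bookkeeping due to Gusein-Zade, Luengo and Melle-Hern\'andez. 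The two points you flag as the real difficulty are indeed where all the content sits, and both are standard facts rather than obstacles: (i) the bundle of punctual Hilbert schemes over $S$ is Zariski-locally trivial, because every point of a smooth surface has a Zariski neighbourhood admitting an \'etale map to $\mathbb{A}^2$, an \'etale map induces an isomorphism of punctual Hilbert schemes, and the corresponding bundle over $\mathbb{A}^2$ is trivialized by translations, so the bundle over the neighbourhood is pulled back from a trivial one; (ii) the compatibility of $E$ with the quotients by the groups $\prod_i S_{m_i}$ is Macdonald's formula $\sum_n E\left(\mathrm{Sym}^n S;u,v\right)s^n=\prod_{p,q}\left(1-u^pv^qs\right)^{-e_{p,q}(S)}$ together with its coloured refinement, and this is precisely what produces the shift $u^{p+d-1}v^{q+d-1}$ when the power operation is applied to the factor $\left(1-t^{d-1}s^d\right)^{-1}$. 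With those two inputs made explicit, your sketch closes up to a complete proof equivalent to the one the paper cites.
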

	
	In particular, for the case of $\X$, we have
	\begin{equation}\label{Formula:EpolyHn}
		\sum_{n=0}^\infty E\left(\Hilbn;t \right) q^{n}
		=\prod_{d=1}^{\infty} \frac{1}{1-t^{d+1}q^d}.
	\end{equation}
	Grojnowski (\cite{Gr}) and Nakajima (\cite{NHHeisenberg})
        introduced a representation-theoretic structure into the
        picture, which has made a deep impact on the whole subject.
	
	In this article, we present a refinement of these formulas by
	calculating the E-polynomials of the strata  \[\Bn_m:=\left\{I\in \Bn
	\;\vrule\; \mu(I)=m \right\}\]
	associated to the
	invariant $\mu(I)$ of ideals introduced above. 
	\begin{theorem}\label{genEBmn}
		The generating function of the E-polynomials of the strata $\Bn_m$ of the Briançon variety is given by
		\begin{equation}\label{Formula:genEBmnthm}
		\sum_{n=0}^{\infty}	E\left(B^{[n]}_m ;t\right) q^n =
		\prod _{i=1}^{m-1} \frac{1}{1-t^{ i+1}}\cdot \sum _{a=1}^m \left((-1)^{a+1} t^{\binom{a}{2}+m-1} {m \brack a}_{t}  \prod _{k=0}^{\infty} \frac{1-q^k t^{k- a}}{1-q^k t^{ k-1}}\right),
		\end{equation}
		\[\mbox{ where }{m \brack a}_{t}=\displaystyle\begin{cases}\displaystyle
		\prod_{i=0}^{a-1} \frac{1-t^{m-i}}{1-t^{i+1}}  &\text{ if } 1\le a\le m\\
		1	&\text{ if } a=0\\
		0   &\text{ if } a>m.
		\end{cases}
		\] 
	\end{theorem}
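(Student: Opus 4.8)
The plan is to capture the invariant $\mu$ geometrically through the punctual refined Hilbert scheme
\[
\Bnnr=\bigl\{(I,J)\ \big|\ I\in\Bn,\ \maxideal I\subseteq J\subseteq I,\ \dim_{\C}(I/J)=r\bigr\},
\]
and to compute its E-polynomial in two independent ways. For the first, note that the condition $\maxideal I\subseteq J\subseteq I$ means exactly that $J/\maxideal I$ is a codimension-$r$ subspace of $I/\maxideal I\cong\C^{\mu(I)}$. Hence the forgetful map $(I,J)\mapsto I$ restricts, over the locally closed stratum $\Bn_m$, to the relative Grassmannian of the sheaf $I/\maxideal I$; this sheaf has locally constant rank $m$ there (that is precisely the statement $\mu\equiv m$), so it is a rank-$m$ vector bundle and the relative Grassmannian is a Zariski-locally trivial $\Grrm$-bundle. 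Using $E(\Grrm;t)={m\brack r}_{t}$ together with the factorization property of the E-polynomial, I obtain the key linear relation
\[
E\bigl(\Bnnr;t\bigr)=\sum_{m\ge r}{m\brack r}_{t}\,E\bigl(\Bn_m;t\bigr).
\]

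Next I would compute the generating series $G_r(q,t):=\sum_{n}E(\Bnnr;t)\,q^{n}$ independently, by torus localization. The $(\C^{*})^{2}$-action on $\X$ lifts to $\Bnnr$, and its fixed points are the pairs consisting of a monomial ideal $I_\lambda$ together with a coordinate subspace of $I_\lambda/\maxideal I_\lambda$ of the prescribed dimension, i.e.\ a partition $\lambda$ with a choice of $r$ of its ``concave corners''. Summing $t^{\dim}$ over the Bia\l ynicki--Birula cells (in the spirit of the computations of Cheah and G\"ottsche that yield \eqref{Formula:EpolyHn} and the punctual series $\sum_n E(\Bn;t)q^n=\prod_{k\ge1}(1-q^{k}t^{k-1})^{-1}$) should resum the partition sum into the infinite products
\[
P_a(q,t)=\prod_{k=0}^{\infty}\frac{1-q^{k}t^{\,k-a}}{1-q^{k}t^{\,k-1}}
\]
that appear in the statement; this is the technical heart of the argument.

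Finally I would invert the Gaussian-binomial relation. The $t$-binomial orthogonality
\[
\sum_{r}(-1)^{\,r-m}\,t^{\binom{r-m}{2}}{r\brack m}_{t}{k\brack r}_{t}=\delta_{mk}
\]
turns the first relation into $\sum_n E(\Bn_m;t)q^n=\sum_{r\ge m}(-1)^{\,r-m}t^{\binom{r-m}{2}}{r\brack m}_{t}\,G_r$. Substituting the product formula for $G_r$ and resumming the resulting infinite $t$-series by the finite $q$-binomial theorem $\prod_{i=0}^{m-1}(1-t^{i}z)=\sum_{a}(-1)^{a}t^{\binom{a}{2}}{m\brack a}_{t}z^{a}$ should collapse it to the stated finite alternating sum over $1\le a\le m$, with the prefactor $\prod_{i=1}^{m-1}(1-t^{i+1})^{-1}$. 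I would pin down the normalization on the trivial cases — $B^{[0]}_1$ is a point while $\Bn_1=\varnothing$ for $n\ge1$, forcing the $m=1$ series to equal $1$ — and check it against $E(B^{[2]}_2;t)=1+t$.

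The step I expect to be the main obstacle is the closed-form evaluation of $G_r$: one must control the Bia\l ynicki--Birula cell dimensions $d(\lambda)$ on the singular variety $\Bn$ uniformly in $n$ and $r$, and show that the weighted partition sum collapses to the products $P_a$. A secondary difficulty is the bookkeeping in the final resummation, since the $P_a$ carry negative powers of $t$ (already the $k=0$ factor is $1-t^{-a}$), so matching the infinite inversion sum to the finite combination of the $P_a$ requires the $q$-binomial identity to be applied with care.
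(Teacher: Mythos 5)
Your skeleton is essentially the paper's own strategy, reorganized to stay punctual throughout, and the reorganization is sensible: the paper works with the full refined Hilbert schemes $\Hilnr$ and the strata $\Hilbn_m$, which forces it to introduce a second matrix $\mathcal{A}=\bigl(E(\Xstar^{[j-i]};t)\bigr)$ recording the punctured-plane factor and to invert it via dual E-polynomials, whereas your punctual relation $E\left(\Bnnr;t\right)=\sum_{m\ge r}{m \brack r}_{t}\,E\left(\Bn_m;t\right)$ followed by Gaussian-binomial inversion eliminates that step entirely. Your fibration argument, your inversion matrix, and your sanity checks (the $m=1$ series equals $1$; $E\left(B^{[2]}_2;t\right)=1+t$) are all correct.

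The genuine gap sits exactly at the step you flag as the technical heart, and your prediction of what that step yields is wrong. The series $G_r(q,t)=\sum_n E\left(\Bnnr;t\right)q^n$ does \emph{not} resum to the products $P_a$: for $a\ge2$ the $q^0$-coefficient of $P_a$ equals $t^{1-a}(1+t+\cdots+t^{a-1})$, a Laurent polynomial with negative powers of $t$, while every $q$-coefficient of $G_r$ is an honest E-polynomial (a sum of nonnegative powers of $t$, by Proposition \ref{Prop:EqualityEpolyHnnr}). The correct closed form, which the paper in effect establishes by combining the Nakajima--Yoshioka formula \eqref{genEpolyrelHnnr} with Poincar\'e duality on the smooth $\left(2n-r(r-1)\right)$-dimensional $\Hilnr$, is
\[
G_r(q,t)=q^{\binom{r}{2}}\prod_{d=1}^{\infty}\frac{1}{1-t^{d-1}q^d}\cdot\prod_{d=1}^{r}\frac{1}{1-t^{d}q^{d}},
\]
and obtaining it requires the tangent-space character formula \eqref{Eq:HilnrCharacter} of \cite{perverse} at the torus-fixed points, which is what pins down the Bia\l ynicki--Birula cell dimensions; an appeal to ``the spirit of Cheah and G\"ottsche'' does not supply this input, and it is precisely the external ingredient the paper leans on. A second, smaller gap: after substituting this $G_r$ into the inversion sum $\sum_{r\ge m}(-1)^{r-m}t^{\binom{r-m}{2}}{r \brack m}_{t}G_r$, the finite Gauss binomial theorem alone does not finish the computation --- it only expands the numerator of ${r \brack m}_{t}$ in powers of $t^{-r}$ (the paper's Lemma \ref{KClem}); the remaining infinite sum over $r$, of the shape $\sum_r(-1)^{r}(tq)^{\binom{r}{2}}t^{-ar}\prod_{d=1}^{r}\left(1-t^dq^d\right)^{-1}$, must then be resummed by Euler's identity \eqref{Formula:EulerIdentity} with $q\mapsto tq$, $z\mapsto t^{-a}$, and it is only at this point that the products $P_a$ (with their negative powers of $t$) appear. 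With these two inputs --- the closed form of $G_r$ via \cite{perverse} and Euler's identity --- your plan does close up and reproduces \eqref{Formula:genEBmnthm}.
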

	 \begin{remark}
           Note that the summand indexed by $a=1$ on the right hand
           side of \eqref{Formula:genEBmnthm} does not depend on $q$,
           and thus may be omitted when calculating 
           $E\left(B^{[n]}_m ;t\right)$ for $n>0$.
	 \end{remark}
	\begin{example}
		We list some examples of the generating function from Theorem \ref{genEBmn}:
		\begin{itemize}
			\item {\bf Case of $m=2$}:
			\begin{align*}
			\sum_{n=0}^{\infty} E\left(B^{[n]}_2; t\right) q^n
			&=\frac{t}{1-t}+
			\frac{t^2}{t^2-1}\cdot
			\prod_{k=0}^{\infty}
			\frac{ 1- t^{k-2} q^k}{  1- t^{k-1} q^k}\\
			&=\frac{t}{1-t}
			\left(1-
			\prod_{k=1}^{\infty}
		    \frac{  1- t^{k-2} q^k}{  1- t^{k-1} q^k} \right)
			\\
			&=\frac{t}{1-t}
			+
			\prod_{k=1}^{\infty}
			 \left( 1- t^{k-2} q^k\right)
			 \cdot\prod_{k=0}^{\infty}
			 \frac{ 1}{ 1- t^{k-1} q^k}.
			 \end{align*}
			 
			\item {\bf Case of $m=3$}:
			\begin{multline*}
			\sum_{n=0}^{\infty}E\left(B^{[n]}_3; t\right) q^n
			=\frac{t^2}{\left(1-t\right)\left(1-t^2\right) }
			- \frac{t^3}{ \left(1-t\right) \left(1-t^2	\right) }\cdot \prod_{k=0}^{\infty} \frac{ 1-t^{k-2}q^k }{1-t^{k-1}q^k } 
			+ \frac{t^{5}}{\left(1-t^2\right) \left(1-t^3	\right) } \cdot \prod_{k=0}^{\infty} \frac{ 1-t^{k-3}q^k }{1-t^{k-1}q^k }\\
			=\frac{t^2}{\left(1-t^2\right) \left(1-t\right)}
			- \frac{t^2}{ \left(1-t\right)^2 } \cdot \prod_{k=1}^{\infty} \frac{ 1-t^{k-2}q^k }{1-t^{k-1}q^k } 
			+ \frac{t^3}{(1-t^2)(1-t)}\cdot  \prod_{k=1}^{\infty}\frac{ 1-t^{k-3}q^k }{1-t^{k-1}q^k }.
			\end{multline*}
		
		\item {\bf Case of $m=4$}:
		\begin{align*}
		\sum_{n=0}^{\infty} E\left(B^{[n]}_4; t\right) q^n
		=& 
		\frac{t^3}{(1-t) \left(1-t^2\right) \left(1-t^3\right)}
		-\frac{t^4}{(1-t) \left(1-t^2\right)^2}\cdot\prod _{d=0}^{\infty } \frac{ 1- t^{d-2} q^d }{ 1- t^{d-1}q^d }
		\\&+\frac{t^6}{(1-t) \left(1-t^2\right) \left(1-t^3\right)}\cdot\prod _{d=0}^{\infty } \frac{ 1- t^{d-3} q^d }{ 1- t^{d-1}q^d }
		\\&-\frac{t^9}{\left(1-t^2\right) \left(1-t^3\right) \left(1-t^4\right)}\cdot\prod _{d=0}^{\infty } \frac{ 1- t^{d-4} q^d }{ 1- t^{ d-1}q^d }.
		\end{align*}
		\end{itemize}
	\end{example}
	
	A key role in our calculations is played by the \emph{refined incidence varieties} inside $\Hilbn\times H^{[n+r]}$:
	\[\Hilnr:=\left\{(I,J)\in \Hilbn\times H^{[n+r]} \; \vrule\;  I\supset J\supseteq \maxideal I
	\right\}.
	\]
	Note that for $(I,J)\in \Hilnr$, $I/J$ is supported at $(0,0)$.
	
	These spaces were introduced in \cite{perverse} by H. Nakajima and
	K. Yoshioka, where they appeared as examples of moduli spaces of stable
	perverse coherent sheaves on a
	blow-up of $\mathbb{P}^2$ at a point.
	
	The key idea suggested to us by A. Oblomkov is that the fibers of the
	projection $\pi: \Hilnr \to \Hilbn$ over
	$\Hilbn_m=\left\{I \;\vrule\; \mu(I)=m\right\}$ is a Grassmannian:
	\begin{equation}\label{Grstructure}
	\begin{tikzcd}
	\quad\quad\quad\Grrm \rightarrow \pi^{-1} \left(\Hilbn_m\right) \subseteq  \Hilnr\arrow[d, "\pi"]  \\
	\quad\quad\quad\quad\Hilbn_m \subseteq  \Hilbn
	\end{tikzcd}
	\end{equation}
	This idea appears in
	\cite{homfly} and \cite{HOMFLYhomology} in the context of a conjecture relating the HOMFLY
	polynomial of the link of a plane curve singularity $C$ to the
	E-polynomials of the Hilbert schemes of points supported on $C$.
	
    The paper is organized as follows. In Section 1, we introduce the refined Hilbert scheme and state some geometric and topological properties that will be needed later. In Section 2, we prove our main result: Theorem \ref{genEBmn}.
    Then in Section 3, we compute a formula for the generating function of $E\left(\Hilbn_m;t\right)$ as an application of Theorem \ref{genEBmn}.
    In Section 4, we give a formula of the Euler characteristics of $\Bn_m$.
    We list a table of examples of $E\left(\Bn_m;t\right)$ in Appendix A.
	
	{\bf Acknowledgments.} We would like to thank Alexei Oblomkov for key
        suggestions, which started this project. We are also grateful
        to Tamas Hausel, Andrei Negut and Rahul Pandharipande for
        useful discussions. This research was supported by Swiss
        National Science foundation grants 137070, 159581, 156645, 
        175799, and the NCCR SwissMAP.
	
	\section{The refined Hilbert schemes}

	\begin{definition}
		Let $n, r\in \mathbb{N}$, $r\ge1$.
		The refined Hilbert scheme
		$\Hilnr\subset \Hilbn\times \Hilnr$
		is defined as
		\begin{equation}\label{Def:Hilnnr}
		\Hilnr=\{(I,J)\in \Hilbn\times \Hilnr \;\vrule\;  I\supset J\supseteq \maxideal I \}.
		\end{equation}
	\end{definition}
	Here, the condition $I\supset J\supseteq \maxideal I$ is equivalent to the requirement that the 
	quotient space $I/J$ lies in the kernel of the multiplication by $x$ and $y$, in other words, it carries a trivial $R$-module structure.
	It follows that for an element $(I,J)\in \Hilnr$,  the quotient $I/J$ is supported at $(0,0)$.
	
	\begin{example}
		If $(I,J)\in H^{[1,3]}$, then $I/J\simeq \C^2$ is a
                subspace of $I/\maxideal I\simeq \C^{\mu(I)}$. Then
                $\mu(I)$ must be at least 2, and the only ideal $I\in
                H^{[1]}$ of codimension 1 with $\mu(I)\ge 2$ is the
                maximal ideal $\lr x,y\rr=\maxideal$. Moreover, since
                $\dim_{\C}\maxideal/\maxideal^2=2$, the only possible 
                $J\in \Hil^{[3]}$, $J\supseteq \maxideal I$ is $\maxideal^2$. Thus $H^{[1,3]}$ is a point $\left(\maxideal,\maxideal^2\right)$.
	\end{example}
	\begin{theorem}[H. Nakajima, K. Yoshioka, \cite{perverse}]
		The refined Hilbert scheme $\Hilnr$ is smooth and of complex dimension $2n-r(r-1)$.
	\end{theorem}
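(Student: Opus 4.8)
The plan is to deduce both smoothness and the dimension formula from a single deformation-theoretic analysis of a pair $(I,J)$, showing that the obstructions to deforming it vanish and that the tangent space has dimension $2n-r(r-1)$ at every point. I would begin by making first-order deformations explicit. Since $\X$ is a smooth surface one has $T_I\Hilbn=\mathrm{Hom}_R(I,R/I)$ and $T_JH^{[n+r]}=\mathrm{Hom}_R(J,R/J)$, and a tangent vector to $\Hilnr$ is a compatible pair $(\phi,\psi)$: the nesting $J\subseteq I$ linearizes to $\phi|_J=\overline\psi$ inside $\mathrm{Hom}_R(J,R/I)$, where $\phi|_J$ is restriction along $J\hookrightarrow I$ and $\overline\psi$ is $\psi$ postcomposed with $R/J\twoheadrightarrow R/I$, while the defining relation $\maxideal I\subseteq J$ linearizes to the requirement that $\psi$ preserve the trivial submodule $I/J\subset R/J$ compatibly with $\phi$. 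Organizing these conditions around the exact sequence $0\to I/J\to R/J\to R/I\to0$ of finite-length modules, in which $I/J$ is the trivial $r$-dimensional module, identifies $T_{(I,J)}\Hilnr$ with the kernel of an explicit linear map assembled from the associated $\mathrm{Hom}$- and $\mathrm{Ext}$-groups.

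To convert this into smoothness and a dimension count I would use the description, recalled above from \cite{perverse}, of $\Hilnr$ as a moduli space of stable perverse coherent sheaves on a blow-up of $\mathbb{P}^2$. For such moduli the tangent space is the trace-free $\mathrm{Ext}^1$ and the obstructions lie in the trace-free $\mathrm{Ext}^2$; by Serre duality adapted to the perverse $t$-structure this $\mathrm{Ext}^2$ is dual to a trace-free $\mathrm{Hom}$-group, which vanishes because a stable object is simple. This vanishing is the crux and the step I expect to be the main obstacle: it is exactly where the stability condition defining the perverse heart is essential, in contrast to the plain nested Hilbert scheme (without the triviality of $I/J$), which fails to be smooth for $r\ge2$. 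Smoothness follows, and then $\dim\Hilnr=\dim\mathrm{Ext}^1=-\chi_0$, the negative of the trace-free Euler characteristic; a Riemann--Roch computation evaluates this to $2n-r(r-1)$, the correction $-r(r-1)=-2\binom r2$ arising in the Riemann--Roch formula from the exceptional class carried by the perverse modification.

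Finally I would cross-check the dimension against the geometry of the projection \eqref{Grstructure}. Over each stratum $\Hilbn_m$ the fiber of $\pi$ is $\Grrm$, so $\pi^{-1}(\Hilbn_m)$ has dimension $\dim\Hilbn_m+r(m-r)$; verifying that the maximum of these over the admissible $m\ge r$ equals $2n-r(r-1)$ confirms the formula, and simultaneously exhibits a dense locus of the expected dimension which, together with the tangent-space bound, gives an independent confirmation of smoothness along the open strata.
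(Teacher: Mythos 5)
Be aware, first, that the paper contains no proof of this statement: it is quoted as a theorem of Nakajima--Yoshioka with \cite{perverse} as the source, and everything the paper later needs (smoothness, the dimension, the fixed-point character formula) is imported from there. So the only meaningful comparison is with the proof in \cite{perverse}, and in outline your plan does follow that route: realize $\Hilnr$ as a framed moduli space of stable perverse coherent sheaves on the blow-up $\widehat{\mathbb{P}}^2$ of $\mathbb{P}^2$ at the origin, identify tangent and obstruction spaces with $\mathrm{Ext}^1$ and $\mathrm{Ext}^2$ groups, kill the obstruction, and get the dimension from Riemann--Roch.

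The genuine gap is exactly the step you flag as the crux, and the mechanism you propose for it would fail. The chain ``Serre duality turns trace-free $\mathrm{Ext}^2$ into trace-free $\mathrm{Hom}$, which vanishes because stable objects are simple'' is the argument valid on a surface with trivial canonical bundle, where $\mathrm{Ext}^2_0(E,E)^\vee\cong\mathrm{Hom}_0(E,E\otimes K)=\mathrm{Hom}_0(E,E)$; it is not what happens here. Since $\C^2$ is affine, $\Hilnr$ is a \emph{framed} moduli space: objects carry a trivialization along the line at infinity $\ell_\infty\subset\widehat{\mathbb{P}}^2$, the tangent space is $\mathrm{Ext}^1\left(E,E(-\ell_\infty)\right)$, the obstruction space is $\mathrm{Ext}^2\left(E,E(-\ell_\infty)\right)$, and Serre duality identifies the dual of the latter with $\mathrm{Hom}\left(E,E\otimes K_{\widehat{\mathbb{P}}^2}(\ell_\infty)\right)$ --- not with any endomorphism group that simplicity controls. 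Its vanishing requires the trivialization along $\ell_\infty$ and the negativity of $K_{\widehat{\mathbb{P}}^2}+\ell_\infty$, together with the exact-sequence properties of the perverse heart; this is where the actual work in \cite{perverse} lies. Moreover, simplicity cannot possibly be the operative ingredient, by your own observation: the ordinary nested Hilbert scheme $\left\{(I,J)\;\vrule\; J\subset I\right\}$ is equally a framed moduli space of simple pairs, yet it is singular for $r\ge 2$. Any correct argument must isolate precisely how the condition $\maxideal I\subseteq J$ (equivalently, membership in the perverse heart, i.e.\ the $m$-stability of \cite{perverse}) enters the vanishing; your sketch never does this, and as written the same words would ``prove'' smoothness of the singular nested Hilbert scheme. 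A smaller point: your concluding cross-check via the fibration \eqref{Grstructure} is not an independent confirmation. It needs $\dim \Hilbn_m=2n-m(m-1)$, an Iarrobino-type fact not established in the paper (and in practice deduced from the very smoothness and fibration structure in question), and agreement of dimensions over open strata says nothing about smoothness of $\Hilnr$ along deeper ones.
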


	The algebraic torus $T\simeq \left(\C^*\right)^2$ acts on
        $\C^2$ by \[t\cdot (x,y)\mapsto (t_1x,t_2y) \]
        \mbox{for $(t_1,t_2)\in T$, $(x,y)\in \C^2$}.  This induces an
        action on the refined Hilbert schemes $\Hilnr$. Furthermore,
        the fixed points $\left(\Hilnr\right)^T$ are parameterized by
        pairs of monomial ideals $(I,J)\in \Hilbn\times \Hil^{[n+r]}$
        such that $I/J\simeq \C^r$ as a trivial $R$-module.  To give a
        description of such $T$-fixed point using Young diagrams, we
        call the boxes of a Young diagram $\Yng$ that have no other
        boxes above and to the right of them, the \emph{elbows} of
        $\Yng$ (cf. Figure \ref{Hinrfixedpt}).
Let $(I,J)\in (\Hilnr)^T$ and $\Yng_I,\Yng_J$ be the corresponding Young diagrams.
    Since $I\supset J$, $\Yng_I$ is a subdiagram of $\Yng_J$. Moreover, the quotient 
	$I/J$ corresponds to a subset $S_{I/J}$ of $\Yng_J$ of $r$ elbows and $\Yng_I=\Yng_J\setminus S_{I/J}$.
	Therefore, we may represent a $T$-fixed point $(I,J)$ by a pair $(\Yng_J, S_{I/J})$ of a Young diagram $\Yng_J$ with $n+r$ boxes, and a subset $S_{I/J}$ of $r$ marked elbows of $\Yng_J$ and we denote the set of these pairs $(\Yng_J, S_{I/J})$ by $\Pi(n,r)$. 
	
	\begin{figure}[h]\label{Hinrfixedpt}
		\vspace*{-0.5cm}
		\[\Yvcentermath1 \Yboxdim{10pt}\left(I=\yng(1,2,3),~ J= \yng(1,2,3,3)~\right)
		~\Leftrightarrow~ \begin{tikzpicture}[scale=1,baseline=-0.5cm]
		\tyoung(0cm,0cm,\star,~\star,~~\star,~~~)[yshift=-2cm]
		\Yfillcolour{yellow}\tgyoung(0cm,0cm,;,:;\star,::;\star).
		\end{tikzpicture} \]
		\caption{\mbox{A fixed point of $H^{[6,9]}$}.}\vspace*{-0.2cm}
	\end{figure}
	
As we will see, the topology of $\Hilnr$ is closely related to the function $\mu$ on $\Hilbn$.
\begin{proposition}\label{Prop:MuAndMaxideal}
	If $I\in \Hilbn$ has $\mu(I)=k$, then $n\ge \frac{k(k-1)}{2}=\binom{k}{2}$. Moreover, the equality holds if and only if $I=\maxideal^{k-1}$.
\end{proposition}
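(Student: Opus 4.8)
The plan is to reduce the statement to a purely combinatorial inequality about monomial ideals, via two standard degeneration steps, and then to prove that inequality by counting standard monomials. First I would localize at the origin. Writing $I=I_{p_1}\cap\dots\cap I_{p_k}$ as in \eqref{IntersectionFormOfI}, the Chinese Remainder Theorem gives $n=\dim_\C R/I=\sum_i\dim_\C R/I_{p_i}\ge \dim_\C R/I_0=\sigma(I)$, while $\mu(I)=\mu(I_0)$ by definition. Thus it is enough to prove that every $\maxideal$-primary ideal $I_0\subseteq R$ with $\mu(I_0)=k$ satisfies $\dim_\C R/I_0\ge\binom{k}{2}$, with equality precisely when $I_0=\maxideal^{k-1}$; the inequality $n\ge\binom{k}{2}$ then follows, and equality in the Proposition additionally forces $n=\sigma(I)$, i.e. $I=I_0$ supported at the origin.

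Next I would degenerate $I_0$ to a monomial ideal. Passing to the associated graded ideal $\mathrm{gr}(I_0)\subseteq \mathrm{gr}_\maxideal R\cong\C[x,y]$ for the $\maxideal$-adic filtration preserves the colength, and a further Gröbner degeneration replaces it by a monomial ideal $J$ with the same Hilbert function; hence $\dim_\C R/J=\dim_\C R/I_0$. Moreover the minimal number of generators can only increase under these specializations: $\mu$ is upper semicontinuous on the punctual Hilbert scheme, since $\dim_\C R/\maxideal I_0=\dim_\C R/I_0+\mu(I_0)$ and colength is upper semicontinuous in flat families, so the locus $\{\mu\ge m\}$ is closed. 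Therefore $\mu(J)\ge\mu(I_0)=k$, and since $a\mapsto\binom{a}{2}$ is increasing it suffices to prove the bound, and to classify the equality case, for the monomial ideal $J$.

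The heart of the argument is this monomial case. Encode $J$ by its staircase, with minimal generators $x^{a_j}y^{b_j}$ for $j=1,\dots,\ell$ ordered so that $a_1>\dots>a_\ell\ge 0$ and $0\le b_1<\dots<b_\ell$, where $\ell=\mu(J)\ge k$. For each $j$ the monomials $x^{a_j}y^{e}$ with $0\le e<b_j$ are standard: divisibility by some generator would require an index $i$ with $a_i\le a_j$ and $b_i<b_j$, contradicting the ordering. These vertical strips occupy distinct columns, hence are disjoint, so $\dim_\C R/J\ge\sum_{j=1}^{\ell}b_j\ge\sum_{j=1}^{\ell}(j-1)=\binom{\ell}{2}\ge\binom{k}{2}$, using $b_j\ge j-1$ for strictly increasing nonnegative integers. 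This establishes the inequality; the symmetric count over the horizontal strips $x^{e}y^{b_j}$, $0\le e<a_j$, yields $\dim_\C R/J\ge\sum_j a_j$ with $a_j\ge\ell-j$.

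Finally, in the equality case $\dim_\C R/J=\binom{k}{2}$ both counts must be sharp, forcing $\ell=k$, $b_j=j-1$ and $a_j=k-j$; the minimal generators are then exactly the degree-$(k-1)$ monomials, so $J=\maxideal^{k-1}$. Consequently the Hilbert function of $R/I_0$ equals that of $R/\maxideal^{k-1}$, which is all-or-nothing in each degree, and this rigidity forces $\mathrm{gr}(I_0)=\maxideal^{k-1}\subseteq\C[x,y]$; in particular $I_0$ contains no element of $\maxideal$-adic order below $k-1$, so $I_0\subseteq\maxideal^{k-1}$, and the equality of colengths then gives $I_0=\maxideal^{k-1}$. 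I expect the main obstacle to lie exactly in this last step: since $\mu$ is only semicontinuous, one must verify that the generator count does not actually drop under the degenerations in the equality case, and it is the Hilbert-function rigidity above, together with the colength-matching of the equal-dimensional quotients, that pins $I_0$ down to $\maxideal^{k-1}$.
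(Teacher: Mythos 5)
Your proof is correct, and it shares the paper's overall strategy --- degenerate to a monomial ideal, note that $\mu$ cannot drop under specialization, then do staircase combinatorics --- but the execution differs in ways worth comparing, and at the key point your version is more complete than the paper's own. Where the paper gets the reduction to monomial ideals from a one-parameter torus limit, identifying $\lim_{t\to 0}\phi(t)\cdot I$ with an initial ideal and bounding $\mu(I)$ by the size of a reduced Gr\"obner basis via Buchberger's algorithm, you obtain the same conclusion from upper semicontinuity of $\mu$ in flat families (via $\dim_\C R/\maxideal I_0=\dim_\C R/I_0+\mu(I_0)$ and semicontinuity of cokernel dimension), which is cleaner; you also spell out the strip-counting bound ($b_j\ge j-1$, $a_j\ge \ell-j$) that the paper leaves implicit as ``a combinatorial question for Young diagrams.'' The substantive difference is the equality case: the paper dispatches uniqueness with ``It follows that \dots it can only be $\maxideal^{k-1}$,'' which elides exactly the subtlety you isolate --- knowing that a monomial degeneration of $I_0$ equals $\maxideal^{k-1}$ does not by itself force $I_0=\maxideal^{k-1}$. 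Your use of the $\maxideal$-adic (order) filtration is what makes this step sound: the all-or-nothing Hilbert function of $R/\maxideal^{k-1}$ forces $\mathrm{gr}(I_0)=\maxideal^{k-1}$, hence every element of $I_0$ has order at least $k-1$, so $I_0\subseteq\maxideal^{k-1}$, and equality of colengths finishes. Note that a plain Gr\"obner degeneration with a standard (largest-monomial) term order would not close this step, since an element whose leading monomial has degree $k-1$ may have terms of lower degree; your associated-graded step is precisely what rules this out, and it fills a genuine gap in the argument as written in the paper.
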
 
\begin{proof}  We look for the maximal value of $\mu(I)$ for $I\in \Hilbn$.
	We recall that if $I_\lambda\in(\Hilbn)^T$ and $\Yng_\lambda$ is the corresponding Young diagram with $n$ boxes, then $\mu(I_\lambda)$ is equal to the number of "elbows" of $\Yng_\lambda$.
	
		We claim that 
		\begin{equation}\label{Lemma:MuMax}
		\max\left\{\mu(I) \;\vrule\; I\in \Hilbn\right\}=\max\left\{\mu(I) \;\vrule\; I\in \left(\Hilbn\right)^T\right\}.
		\end{equation}
		To see this, we consider the action of the one-parameter subgroup $\phi:\C^*\to T,~t\mapsto(t,t^N)$ on $\Hilbn$: $t\cdot f(x,y)= f(t^{-1}x, t^{-N}y)$, with  $N\in \N$ large.    
		 For each $I\in \Hilbn$, the limit $t\to 0$ of the action $\phi(t)\cdot(x,y)= (t^{-1}x, t^{-N}y)$ is a $T$-fixed point $I_\lambda$ (\cite{HNlecture}).	
		Let $U_\lambda:=\left\{I\in \Hilbn \;\vrule\;  \underset{t\to 0}{\lim} \phi(t)\cdot I=I_\lambda\right\}$.
		We claim that $\mu\left(I_\lambda\right)\ge\mu\left(I\right)$ for all $I\in U_\lambda$.
		Taking the limit $t\to 0$ of the $\C^*$-action induces the monomial ordering "$y\succ x$" and the limit $\underset{t\to 0}{\lim} \phi(t)\cdot I=I_\lambda$ is, in fact, the initial ideal of $I$ with respect to this ordering $\succ$.   
		Now, if $I\in U_\lambda$ and $G_I$ is a reduced Gröbner basis of $I$, then by definition, $\mu\left(I_\lambda\right)=|G_I|$. Since $I$ is an intersection of ideals with $I_0$ and $\mu(I)=\mu(I_0)$, we have $\mu(I)\le |G_I|= \mu\left(I_\lambda\right)$ by the Buchberger Algorithm construction of the reduced Gröbner basis.
		We conclude that the function $\mu(I), I\in\Hilbn$ reaches its maximum value in $\left(\Hilbn\right)^T$.	

	Now, by equation (\ref{Lemma:MuMax}), the question of finding the maximal value of $\mu(I)$ for $I\in \Hilbn$ reduces to a combinatorial question for Young diagrams.
    In this context, $\max\left\{\mu(I)\;\vrule\;I\in\Hilbn\right\}$ is equal to the maximal number of elbows that a Young diagram with $n$ boxes can have.
	It follows that 
	if $I$ is an ideal with $\dim_\C R/I=\binom{k}{2}$ and $\mu(I)=k$, then it can only be $\maxideal^{k-1}=\lr y^{k-1},y^{k-2}x,\dots,yx^{k-2},x^{k-1}\rr$, which is the monomial ideal corresponding to the partition $\lambda=k-1\ge k-2\ge \cdots\ge 1$.
\end{proof}
    An immediate consequence of Proposition \ref{Prop:MuAndMaxideal} is the following corollary:
    \begin{corollary}
    	We have
    	$\sigma(I)\ge\binom{r}{2}$, and  $\Hilnr$ consists of a single point $\left(\maxideal^{r-1},\maxideal^{r}\right)$ if $n=\binom{r}{2}$.
    \end{corollary}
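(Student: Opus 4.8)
The plan is to deduce both statements from Proposition~\ref{Prop:MuAndMaxideal}, after first establishing the lower bound $\mu(I)\ge r$ for every $(I,J)\in\Hilnr$. I would start from the defining inclusions $\maxideal I\subseteq J\subseteq I$: since $\dim_\C I/J=(n+r)-n=r$ and $I/J$ is a quotient of $I/\maxideal I$, I get $\dim_\C I/\maxideal I\ge r$. The point I would stress is the identification $\dim_\C I/\maxideal I=\mu(I)$. This holds because $I/\maxideal I$ is supported at the origin, so localizing at $\maxideal$ replaces $I$ by its local component $I_0$ and identifies this dimension with $\dim_\C I_0/\maxideal I_0=\mu(I)$; equivalently, it is the statement that the fiber of the fibration~\eqref{Grstructure} over $I$ is $\mathrm{Gr}_r\left(\C^{\mu(I)}\right)$, which is nonempty precisely when $\mu(I)\ge r$. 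Either route yields $\mu(I)\ge r$.

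For the first assertion I would then apply Proposition~\ref{Prop:MuAndMaxideal} to the local component $I_0\in B^{[\sigma(I)]}$, which satisfies $\mu(I_0)=\mu(I)\ge r$. Reading the Proposition with colength $\sigma(I)$ gives
\[
\sigma(I)\ \ge\ \binom{\mu(I)}{2}\ \ge\ \binom{r}{2},
\]
as claimed.

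For the equality case $n=\binom{r}{2}$, I would combine the previous bound with the inequality $n=\dim_\C R/I\ge\dim_\C R/I_0=\sigma(I)$, coming from the fact that the origin is one of the support points of $I$. This produces the chain
\[
\binom{r}{2}=n\ \ge\ \sigma(I)\ \ge\ \binom{\mu(I)}{2}\ \ge\ \binom{r}{2},
\]
forcing every inequality to be an equality. From $\sigma(I)=n$ I conclude $I=I_0\in\Bn$; from $\binom{\mu(I)}{2}=\binom{r}{2}$ and the monotonicity of $\binom{\cdot}{2}$ I get $\mu(I)=r$; and the sharp equality $\sigma(I)=\binom{\mu(I)}{2}$ together with the equality clause of Proposition~\ref{Prop:MuAndMaxideal} forces $I=\maxideal^{r-1}$. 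Finally, because $\dim_\C I/J=r=\mu(I)=\dim_\C I/\maxideal I$, the surjection $I/\maxideal I\twoheadrightarrow I/J$ must be an isomorphism, so $J=\maxideal I=\maxideal^{r}$, giving $\Hilnr=\{(\maxideal^{r-1},\maxideal^{r})\}$.

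The only genuinely substantive step is the identification $\dim_\C I/\maxideal I=\mu(I)$ for an ideal that need not be supported at a single point; this is where the localization at $\maxideal$, or equivalently the fibration structure~\eqref{Grstructure}, does the real work. Everything after that is a bookkeeping of the displayed chain of inequalities and an appeal to the equality clause of the Proposition.
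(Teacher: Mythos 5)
Your proof is correct and is essentially the paper's own (implicit) argument: the paper states this corollary without proof as an immediate consequence of Proposition~\ref{Prop:MuAndMaxideal}, and your route --- identifying $\dim_\C I/\maxideal I=\mu(I)$ via localization (equivalently, via the Grassmannian fibers in \eqref{Grstructure}) to get $\mu(I)\ge r$, then applying the Proposition and its equality clause to $I_0\in B^{[\sigma(I)]}$ and squeezing $\binom{r}{2}=n\ge\sigma(I)\ge\binom{\mu(I)}{2}\ge\binom{r}{2}$ --- is exactly that intended deduction, carried out carefully. The only step left unsaid is the trivial verification that $\left(\maxideal^{r-1},\maxideal^{r}\right)$ does lie in $\Hilnr$ when $n=\binom{r}{2}$, which is immediate from the colength count $\dim_\C R/\maxideal^{k}=\binom{k+1}{2}$.
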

   \noindent In particular, this shows that 
    the refined Hilbert scheme $\Hilnr$ is empty if $n< \binom{r}{2}$.
    
	According to a theorem of Bialynicki-Birula (\cite{Bi1973,Bi1976}), a $\C^*$-action
	on a smooth projective variety with isolated fixed points induces a
	decomposition of $M$ into affine spaces
	$M =\bigcupast_{p\in M^T}\mathbb{A}^{\alpha(p)}$, where $\alpha(p)$ is
	the number of positive weights of $T_p M$. In this case, the compactly supported Poincaré
	polynomial of $M$ is 
	$\sum_{p\in M^T}
	t^{\alpha(p)}$
	 and it agrees with the $E$-polynomial of $M$.
	 In other words, we have
	 \begin{equation}\label{Eq:EpolyAndCptPoincare}
	 {E}\left(M; t\right)= {P}\left(M; \sqt^{-1}\right)t^{\dim_{\C} M}=\sum_{p\in M^T}
	 t^{\alpha(p)}.
	 \end{equation}
	   Even though the Hilbert schemes of points on the plane are not projective,
	property \ref{Eq:EpolyAndCptPoincare}, nevertheless, holds for $\Hilbn$ endowed with the 
	$\C^*$-action.
	Moreover, this 
	$\C^*$-action on $\Hilbn$ induces, at the same time, a cell decomposition of the Briançon variety $
	\Bn=\bigcupast_{p\in \Pi(n)}\mathbb{A}^{2n-\alpha(p)}$, where $\alpha(p)$ is
	the number of positive weights of $T_p \Hilbn$ (See \cite{ES,HR2,HNlecture}).
	The same arguments go through for
	$\Hilnr$ and $\Bnnr$:
	\begin{proposition}\label{Prop:EqualityEpolyHnnr} We have
		\begin{equation*}
		{E}\left(\Hilnr; t\right)=\sum_{p\in \Pi(n,r)}
		t^{\alpha(p)} \text{~~ and~~~}
		{E}\left(\Bnnr; t\right)=\sum_{p\in \Pi(n,r)}
		t^{2n-r(r-1)-\alpha(p)},
		\end{equation*}
		 where $\alpha(p)$ is
		the number of positive weights of $T_{p} \Hilnr$.
	\end{proposition}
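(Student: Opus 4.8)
The plan is to obtain both E-polynomials as sums over Bialynicki--Birula cells attached to a generic one-parameter subgroup, exactly as was just carried out for $\Hilbn$ and $\Bn$; the only new inputs are the smoothness of $\Hilnr$ (Nakajima--Yoshioka) and the identification of its fixed locus with $\Pi(n,r)$. Concretely, I would fix $\phi:\C^*\to T$, $t\mapsto (t,t^N)$ with $N\gg 0$, chosen so that $\phi$ has the same fixed points on $\Hilnr$ as the full torus $T$. Since $(\Hilnr)^T$ is the finite set indexed by $\Pi(n,r)$, these fixed points are isolated, and for $N$ large the induced weights on each tangent space $T_p\Hilnr$ are all nonzero. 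Thus at every $p\in\Pi(n,r)$ there is a splitting into positive- and negative-weight subspaces; the positive part has dimension $\alpha(p)$, and since $\Hilnr$ is smooth of dimension $2n-r(r-1)$, the negative part has dimension $2n-r(r-1)-\alpha(p)$.

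The first step computes $E(\Hilnr;t)$ via the ascending cells $W_p^+=\{(I,J)\mid \lim_{t\to 0}\phi(t)\cdot(I,J)=p\}$. As for $\Hilbn$, one must check that these limits exist despite $\Hilnr$ not being projective: the action $\phi(t)$ scales the support of the underlying ideals by $(t,t^N)$, contracting it to $(0,0)$ as $t\to 0$, so by properness of the Hilbert--Chow morphisms $\aj$ on the two factors of $\Hilbn\times \Hil^{[n+r]}$ the trajectory remains in a compact set and its limit exists in the closed, $\phi$-invariant subvariety $\Hilnr$, hence in $(\Hilnr)^T$. The Bialynicki--Birula theorem then makes each $W_p^+$ an affine space $\mathbb{A}^{\alpha(p)}$, giving $\Hilnr=\bigcupast_{p\in\Pi(n,r)}\mathbb{A}^{\alpha(p)}$. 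Additivity of the E-polynomial together with $E(\mathbb{A}^d;t)=t^d$ (from $E(\C;u,v)=uv$) yields $E(\Hilnr;t)=\sum_{p}t^{\alpha(p)}$.

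For $\Bnnr$ I would instead use the opposite (descending) Bialynicki--Birula strata of the \emph{same} smooth variety $\Hilnr$, namely $W_p^-=\{(I,J)\mid \lim_{t\to\infty}\phi(t)\cdot(I,J)=p\}$; note that, just as $\Bn$ is singular, one works inside the smooth total space rather than applying the theorem to $\Bnnr$ itself. The local structure of the Bialynicki--Birula strata at each isolated fixed point of the smooth $\Hilnr$ makes $W_p^-$ an affine space of dimension equal to the number of negative weights of $T_p\Hilnr$, that is $2n-r(r-1)-\alpha(p)$. A point admits a limit as $t\to\infty$ exactly when the support of its ideals does not escape to infinity, i.e.\ exactly when it is supported at $(0,0)$; hence $\bigcupast_p W_p^- = \Bnnr$, and this is a genuine decomposition because $\Bnnr$, fibering over the compact $\Bn$ with Grassmannian fibers $\Grrm$ (diagram \eqref{Grstructure}), is itself compact. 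Additivity then gives $E(\Bnnr;t)=\sum_p t^{2n-r(r-1)-\alpha(p)}$.

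The main obstacle is precisely the non-projectivity of $\Hilnr$: the Bialynicki--Birula theorem is usually stated for smooth \emph{projective} varieties, so the crux is to verify that its conclusions—the existence of the limits and the affine-cell structure—survive here. I expect this to be settled exactly as in the cited treatment of $\Hilbn$ and $\Bn$, replacing projectivity by the properness of $\aj$ and the contraction of supports to the origin under $\phi$, supplemented by the genericity check ensuring isolated fixed points with no zero weights. Once the limits are known to exist and land in the finite fixed locus $\Pi(n,r)$, the remaining weight count and the passage to E-polynomials are formal.
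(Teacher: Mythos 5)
Your proposal is correct and takes essentially the same approach as the paper: the paper deduces the proposition from the Bialynicki--Birula decomposition applied to the smooth, non-projective $\Hilnr$ (ascending cells computing $E(\Hilnr;t)$, and descending cells --- whose union is exactly $\Bnnr$ --- computing $E(\Bnnr;t)$), citing \cite{ES,HR2,HNlecture} for precisely the limit-existence and affine-cell arguments you spell out. One minor justification needs repair: $\Bnnr\to\Bn$ is not globally a Grassmannian fibration (the fiber dimension jumps with $\mu(I)$), but the compactness of $\Bnnr$ that you need follows directly from its being closed in the product of projective varieties $\Bn\times B^{[n+r]}$.
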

	
	To find the E-polynomial of $\Hilnr$, we apply the character formula for the tangent space of a $T$-fixed point by H. Nakajima and K. Yoshioka in \cite{perverse}. 
	
	\textbf{Notation}: 
	For each two distinct "elbows" $\Yboxdim{8pt}\young(<\star>)$ and $\Yboxdim{8pt}\young(<\blacktriangle>)$ of a Young diagram represented by coordinates $(a,b), (c,d)\in \Z_{\ge 1}^2$ (assuming that $a<c, d<b$), we consider the box $\Yboxdim{8pt}\young(\bullet)=(a,d)$ (cf. Figure \ref{Fig:IntersectionOfElbows}).
		\begin{figure}[h]\vspace*{-0.5cm}\label{Fig:IntersectionOfElbows}
		\[\Yvcentermath1
		\Yboxdim{10pt}\young(\star,~,\bullet\blacktriangle,~~~~) 
		\]
		\centering\text{ An example for $\Yboxdim{8pt}\young(<\star>)=(1,4),\young(<\blacktriangle>)=(2,2)$ and $\Yboxdim{8pt}\young(\bullet)=(1,2)$.}\vspace*{-0.5cm}
	\end{figure}

    Given a pair $\left(\Yng_J, S_{I/J}\right)$, we denote by $Q_{I,J}=\left\{(a,d)\in \Yng_J\;\vrule\;\exists (a,b),(c,d)\in S_{I/J}\text{ with }a<c, d<b \right\}$ the set of boxes obtaining this way.
		
	For a box $\Yboxdim{7pt}\yng(1)$ of a Young diagram, we define ${a(\Yboxdim{7pt}\yng(1))}$ to be the number of the boxes above $\Yboxdim{7pt}\yng(1)$ and $l(\Yboxdim{7pt}\yng(1))$ to be the number of the boxes on right of  $\Yboxdim{7pt}\yng(1)$.
	
	Denote by $T_i$ the one-dimensional representation given by $T_i: (t_1,t_2)\to t_i, i=1,2$.
	\begin{theorem}[H. Nakajima, K. Yoshioka, \cite{perverse}]
		Let $(I,J)$ be a fixed point of $\Hilnr$ of the $T$-action, and $\left(\Yng_J, S_{I/J}\right)$ be the corresponding marked Young diagram. We note that $\Yng_I=\Yng_J\backslash S_{I/J}$ is the Young diagram corresponding to $I$.
		The character of the tangent space $ T_{(I,J)}\Hilnr$ as a $T$-module is given by
		\begin{equation}\label{Eq:HilnrCharacter}
			 \sum_{\bxnoIn }\left( T_1^{-l_\Yng( \bxnoIn )} T_2^{a_{\Yng_I}( \bxnoIn )+1} + T_1^{l_{\Yng _I}( \bxnoIn )+1} T_2^{-a_{\Yng}( \bxnoIn)} \right)
		\end{equation} 
		where the summation runs over all  box $\bxnoIn$ of $\Yng_J\backslash\left(S_{I/J}\bigcup Q_{I,J}
\right)$.
	\end{theorem}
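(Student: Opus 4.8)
The plan is to compute the tangent character by equivariant deformation theory, reducing everything to an Euler-form computation on $\C^2$ followed by a combinatorial simplification. The starting point is to identify $T_{(I,J)}\Hilnr$ with the space of first-order deformations of the flag $\maxideal I\subseteq J\subseteq I\subseteq R$: such a deformation is a compatible pair $(\phi,\psi)$ with $\phi\in\mathrm{Hom}_R(I,R/I)$ and $\psi\in\mathrm{Hom}_R(J,R/J)$ that agree along the inclusion $J\subset I$ and preserve the triviality of $I/J$ (equivalently, preserve $\maxideal I\subseteq J$). The cleanest way to organize this is via the Nakajima--Yoshioka interpretation of $\Hilnr$ as a moduli space of stable perverse coherent sheaves $E$ on the one-point blow-up, for which $T_{(I,J)}\Hilnr\simeq\mathrm{Ext}^1(E,E)$. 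The smoothness theorem stated above then guarantees that the reduced groups $\mathrm{Ext}^0(E,E)$ and $\mathrm{Ext}^2(E,E)$ vanish at these points, so that the tangent character equals the \emph{full} equivariant Euler form $-\chi_T(E,E)=-\sum_i(-1)^i\,\mathrm{ch}\,\mathrm{Ext}^i(E,E)$, which is automatically a genuine Laurent polynomial in $T_1,T_2$ rather than a mere formal series.

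The second step is to make $-\chi_T(E,E)$ explicit by localization at the fixed point. Writing $P_{\Yng_I}=\sum_{(i,j)\in\Yng_I}T_1^{i}T_2^{j}$ and $P_{\Yng_J}$ for the box-generating functions of the two diagrams, and using the Koszul resolution of $\C$ over $R$ on $\C^2$, the Euler form becomes a single rational function in $T_1,T_2$ assembled from $P_{\Yng_I}$, $P_{\Yng_J}$, and the character $\tfrac{1}{(1-T_1)(1-T_2)}$ of $R$. The triviality condition on $I/J$ enters precisely through the $\maxideal$-summand relating $\Yng_I$ to $\Yng_J=\Yng_I\sqcup S_{I/J}$. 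When $r=0$ this collapses to the classical Ellingsrud--Str\o mme expression, which is exactly the $r=0$ shape of the asserted formula, and I would recover it first as a consistency check before handling the genuinely refined case.

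The third and hardest step is the combinatorial reorganization of this rational function into the stated sum over boxes. Here one must show that, after clearing denominators, all contributions cancel except for a single pair of monomials per surviving box, and that the surviving boxes are exactly those of $\Yng_J\setminus(S_{I/J}\cup Q_{I,J})$: the $r$ marked elbows $S_{I/J}$ drop out because their deformations are absorbed into the fiber (Grassmannian) directions, while the meet-boxes $Q_{I,J}$ drop out because each pair of marked elbows $(a,b),(c,d)$ with $a<c,\ d<b$ produces a cancellation localized at the box $(a,d)$. The delicate bookkeeping is that the arm is measured in $\Yng_I$ in the $T_2$-positive monomial $T_1^{-l_{\Yng_J}(\bxnoIn)}T_2^{a_{\Yng_I}(\bxnoIn)+1}$ but in $\Yng_J$ in the $T_1$-positive monomial $T_1^{l_{\Yng_I}(\bxnoIn)+1}T_2^{-a_{\Yng_J}(\bxnoIn)}$ (and conversely for the leg); I expect this asymmetric arm/leg accounting, forced by the one-sided interaction between $\Yng_I$ and $\Yng_J$, to be the main obstacle. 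As a sanity check I would verify that the elbows form a strictly monotone staircase, so that distinct pairs yield distinct meet-boxes and hence $|Q_{I,J}|=\binom{r}{2}$; the number of surviving boxes is then $(n+r)-r-\binom{r}{2}=n-\binom{r}{2}$, and doubling gives $2n-r(r-1)$, in agreement with the dimension of $\Hilnr$ recorded above.
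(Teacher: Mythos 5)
First, a point of reference: the paper does not prove this theorem at all --- it is quoted from Nakajima--Yoshioka \cite{perverse}, whose derivation goes through the quiver (ADHM-type) description of stable perverse coherent sheaves on the blow-up, where the tangent space at a fixed point is the middle cohomology of an explicit three-term complex of $T$-modules whose character is then simplified combinatorially. Measured against that, your proposal has a genuine gap: your step 3 --- the cancellation argument that is supposed to turn an Euler form into the sum over boxes of $\Yng_J\setminus\left(S_{I/J}\cup Q_{I,J}\right)$, with arm measured in $\Yng_I$ and leg in $\Yng_J$ in one monomial and the opposite convention in the other --- \emph{is} the content of the theorem, and you leave it as a stated expectation (``I expect \ldots to be the main obstacle''). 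The staircase argument giving $|Q_{I,J}|=\binom{r}{2}$ and the count $2\left(n-\binom{r}{2}\right)=2n-r(r-1)$ are correct, but they are consistency checks on the answer, not steps toward it.

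Moreover, the plan would fail before reaching step 3, because steps 1 and 2 do not splice together. In step 1 you identify $T_{(I,J)}\Hilnr$ with $\mathrm{Ext}^1(E,E)$ for a perverse coherent sheaf $E$ on the blow-up; the Euler form you are then entitled to compute lives on the blow-up, where the K-theory class of $E$ records $r$ through the exceptional curve $C$, and it is precisely the self-intersection $C^2=-1$ that produces the term quadratic in $r$. In step 2 you instead propose an Euler form over $\C^2$ assembled from $P_{\Yng_I}$, $P_{\Yng_J}$ and the character of $R$. No such expression can work: it depends only on the K-classes of $I$ and $J$ on $\C^2$, so it cannot detect the refinement condition $\maxideal I\subseteq J$, which is a condition on the pair rather than on classes and which is exactly what separates the smooth $\Hilnr$ from the nested Hilbert scheme (singular for $r\ge 2$). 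Concretely, any Euler-form expression on $\C^2$ in these classes is a Laurent polynomial whose value at $T_1=T_2=1$ is an intersection number, affine-linear in $n$ and $r$; it can therefore never equal the character of a space of dimension $2n-r(r-1)$ for all $n,r$. Finally, your vanishing argument runs backwards: smoothness of $\Hilnr$ is a \emph{consequence} of the vanishing of the obstruction group, proved by Nakajima--Yoshioka from stability and the framing, not a premise one may quote to force that vanishing. A completed proof along your lines must either reproduce their linearized quiver complex at the fixed point, or else linearize the flag condition $\maxideal I\subseteq J\subseteq I$ directly on $\C^2$ and compute the character of the resulting kernel; in both cases the refinement condition has to enter the linear algebra before any diagram combinatorics can begin.
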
      
	\begin{figure}[h]
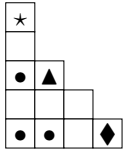
\vspace*{-0.5cm}
		\[\Yboxdim{11pt}\young(\star,~,\bullet\blacktriangle,~~~,\bullet\bullet~\blacklozenge)
		\]\caption{The summation runs over all empty boxes "$\square$".} 
		\label{irrebox} \vspace*{-0.5cm}   	
	\end{figure}		

	\begin{corollary}
		The generating function of the E-polynomial of $\Hilnr$ has the form
		\begin{equation}\label{genEpolyrelHnnr}
		\sum_{n=\binom{r}{2}}^\infty E\left(\Hilnr;t \right) q^{n}
		=q^{\binom{r}{2}} \left(\prod_{d=1}^{\infty} \frac{1}{1-t^{(d+1)}q^d} \right)
		\left(\prod_{d=1}^{r} \frac{1}{1-t^{d} q^d} \right).
		\end{equation}
	\end{corollary}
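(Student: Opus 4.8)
The plan is to compute $E\left(\Hilnr;t\right)$ directly from Proposition \ref{Prop:EqualityEpolyHnnr}, namely as $\sum_{p\in\Pi(n,r)}t^{\alpha(p)}$, by reading off the positive tangent weights $\alpha(p)$ from the Nakajima--Yoshioka character formula \eqref{Eq:HilnrCharacter}, and then to evaluate the resulting sum over marked Young diagrams as a generating function in $q$. First I would fix the generic one-parameter subgroup $\phi(s)=(s,s^N)$, $N\gg0$, already used in the proof of Proposition \ref{Prop:MuAndMaxideal}; under it a weight $T_1^aT_2^b$ has the sign of $a+Nb$, so it is positive exactly when $b>0$, or when $b=0$ and $a>0$. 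Applied to the two summands of \eqref{Eq:HilnrCharacter} attached to a box $\bxnoIn\in\Yng_J\setminus(S_{I/J}\cup Q_{I,J})$, the first summand $T_1^{-l_\Yng(\bxnoIn)}T_2^{a_{\Yng_I}(\bxnoIn)+1}$ has $T_2$-exponent $\ge1$, hence is always positive, while the second summand $T_1^{l_{\Yng_I}(\bxnoIn)+1}T_2^{-a_\Yng(\bxnoIn)}$ is positive if and only if $a_{\Yng_J}(\bxnoIn)=0$, i.e. exactly when $\bxnoIn$ sits at the top of its column in $\Yng_J$. (As a consistency check, running the same computation at $r=0$, where \eqref{Eq:HilnrCharacter} reduces to the classical tangent character of $\Hilbn$, yields $\sum_{|\lambda|=n}t^{|\lambda|+c(\lambda)}$ with $c(\lambda)$ the number of columns, whose generating function is Cheah's product \eqref{Formula:EpolyHn}.)

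Next I would count. A fixed point is a pair $(\Yng_J,S_{I/J})$ with $|\Yng_J|=n+r$ and $|S_{I/J}|=r$. The first family contributes one positive weight per box, that is $|\Yng_J\setminus(S_{I/J}\cup Q_{I,J})|$ of them. The key bookkeeping is $Q_{I,J}$: the $r$ chosen elbows form a strictly decreasing staircase $(a_1,b_1),\dots,(a_r,b_r)$ with the $a_i$ increasing and the $b_i$ decreasing, so each pair $i<j$ produces the box $(a_i,b_j)\in\Yng_J$, these $\binom{r}{2}$ boxes are distinct, and the first count equals $n-\binom r2$. For the second family, every chosen elbow is a top box while no box of $Q_{I,J}$ is (each $(a_i,b_j)$ has the elbow $(a_i,b_i)$ strictly above it); hence the number of positive second-family weights is $c(\Yng_J)-r$. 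The upshot is the clean formula
\[
\alpha(p)=\bigl(n-\tbinom r2\bigr)+\bigl(c(\Yng_J)-r\bigr),
\]
which, crucially, is \emph{independent} of the choice of the $r$ elbows. Since every $r$-subset of the elbows of $\Yng_J$ is admissible, this gives
\[
E\left(\Hilnr;t\right)=\sum_{|\Yng_J|=n+r}\binom{e(\Yng_J)}{r}\,t^{(n-\binom r2)+c(\Yng_J)-r},
\]
where $e(\Yng_J)$ denotes the total number of elbows of $\Yng_J$.

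Finally I would evaluate the generating function by encoding a diagram through its column multiplicities $m_d=\#\{\text{columns of height }d\}$, so that $|\Yng_J|=\sum_d d\,m_d$, $c(\Yng_J)=\sum_d m_d$, and $e(\Yng_J)=\#\{d:m_d>0\}$ (an elbow is the rightmost column of each maximal run of equal heights). Setting $x_d=t^{d+1}q^d$, summing the geometric series in each $m_d$, and writing $\binom{e}{r}$ as a sum over $r$-subsets $D$ of the active indices, the sum factors as
\[
\sum_{n}E\left(\Hilnr;t\right)q^n=q^{-r}t^{-2r-\binom r2}\Bigl(\prod_{d\ge1}\frac{1}{1-x_d}\Bigr)\,e_r(x_1,x_2,\dots),
\]
where $e_r$ is the $r$-th elementary symmetric function. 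Since $x_d=t\,(tq)^d$, homogeneity gives $e_r(x_1,x_2,\dots)=t^r\,e_r(y,y^2,y^3,\dots)$ with $y=tq$, and the classical identity $e_r(y,y^2,y^3,\dots)=y^{\binom{r+1}2}\prod_{d=1}^r(1-y^d)^{-1}$ (the generating function for partitions into at most $r$ parts) collapses the prefactors to $q^{\binom r2}$ and $\prod_{d=1}^r(1-t^dq^d)^{-1}$, leaving $\prod_{d\ge1}(1-t^{d+1}q^d)^{-1}$, which is exactly \eqref{genEpolyrelHnnr}.

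The main obstacle is the middle step: pinning down the combinatorial geometry of $Q_{I,J}$ inside $\Yng_J$ and verifying both that $|Q_{I,J}|=\binom r2$ and that $Q_{I,J}$ meets neither the elbows $S_{I/J}$ nor the set of top boxes of $\Yng_J$. This is precisely what makes $\alpha(p)$ independent of $S_{I/J}$ and lets the sum over $\Pi(n,r)$ collapse to the elementary-symmetric-function expression; once that is established, the product evaluation is routine.
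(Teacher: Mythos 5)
Your proposal is correct and takes essentially the same route as the paper: the paper also reduces the computation to counting positive tangent weights at the $T$-fixed points via Proposition \ref{Prop:EqualityEpolyHnnr} and the character formula \eqref{Eq:HilnrCharacter}, except that it simply invokes the argument of Corollaries 5.3--5.4 of \cite{perverse} (with the weight signs reversed) instead of carrying out the count. Your explicit derivation of $\alpha(p)=\bigl(n-\binom{r}{2}\bigr)+\bigl(c(\Yng_J)-r\bigr)$ --- including the verification that $Q_{I,J}$ has exactly $\binom{r}{2}$ boxes, none of which are elbows or top boxes --- and the elementary-symmetric-function evaluation supply precisely the details that the citation leaves implicit, and they check out (e.g.\ for $H^{[2,3]}$ both your fixed-point sum and the product formula give $t^4+2t^3+t^2$).
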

	
	\begin{proof}
		By Proposition \ref{Prop:EqualityEpolyHnnr}, this is equivalent to find its generating function of the compactly supported Poincaré polynomials of $\Hilnr$.
		We apply the argument of [\cite{perverse} Corollary 5.3 and 5.4] for the Poincaré polynomial of $\Hilnr$.
		To compute $E\left(\Hilnr;t\right)$, we count the sum of weights with opposite sign in their calculation for the compactly supported Poincaré polynomial. 
        Hence we obtain equation (\ref{genEpolyrelHnnr}).
	   	\end{proof}

	\section{Proof of \bf{Theorem} \ref{genEBmn}}
	We consider the refined Hilbert scheme strata
	\begin{align*}
	\HilbCn_m&:=\left\{I\in \HilbCn \;\vrule\; \mu(I)=m\right\},\\
	\HilbCn_m(s)&:=\left\{I\in \HilbCn ~\vrule~ \mu(I)=m, \sigma(I)=s \right\}
	\end{align*}
	and it induces decomposition of $\Hilbn$
	\[ \Hilbn= \bigcupast_m \Hilbn_m.\] 
	
	If $J\in \Hilnr$ satisfies the condition $I\supset J\supseteq\maxideal I$, then $J$ is fully determined by its image in  $I/\maxideal I$.
	Thus for a fixed $I\in \Hilbn$ with $\mu(I)=m$, the set of $J\in H^{[n+r]}$ such that $(I,J)\in \Hilnr$ is parameterized by the Grassmannian of $r$-dimensional subspaces of $I/\maxideal I\simeq \mathbb{C}^{m}$. Over each stratum $\Hilbn_m$ of $\Hilbn$, the projection map $ \Hilnr\to \Hilbn$ has fibers $Gr(r,\mathbb{C}^m)$ at each ideal $I\in \Hilbn_m$.
	
	Since Grassmannians are projective and smooth, their
        $E$-polynomials and Poincaré polynomials are equal:
	\[E\left(Gr(r, \mathbb{C}^m); t\right)
	=P\left(Gr(r,\mathbb{C}^m); \sqt \right)
	={m \brack r}_{t},\] where ${m \brack r}_{t}:=\prod_{i=1}^{r}\frac{1-t^{m-i+1}}{1-t^{i}}$.
	This Grassmannian bundle structure together with the motivic property of the E-polynomial imply the following equality
	\begin{equation}\label{relation1}
	E\left( \Hilnr;t\right) =\sum_{m=1}^{\mu_{n}^{\max}} E\left( \Hilbn_m;t\right)E\left( Gr_r(\C^m);t\right)=\sum_{m=1}^{\mu_{n}^{\max}} E\left( \Hilbn_m;t\right) {m \brack r}_{t},
	\end{equation}
	where $\mu_{n}^{\max}:=\max\left\{\mu(I) \;\vrule\; I\in \Hilbn \right\}=\max\left\{\mu(I) \;\vrule\; I\in \left(\Hilbn\right)^T \right\}$.
	
	Furthermore, presenting an ideal $I\in\Hilbn$ as an intersection of ideals $I_0\cap I'$, where $I_0$ is the part supported on $(0,0)$ and $I'$ s the part supported at $\Xstar:=\C^2 \backslash \{(0,0)\}$, we obtain the decomposition
	\begin{equation}\label{Relation:DecomHnm}
	\Hilbn_m\simeq \bigcupast_{s=0}^{n} \left(B_m^{[s]}\times \Xstar^{[n-s]}\right).
	\end{equation}
	Using the motivic properties of the E-polynomial, we can conclude
	\begin{equation}\label{relation2}
	E \left( \Hilbn_m ;t \right) =\sum_{s=0}^{n} E \left( \Xstar^{[n-s]} ;t \right)\cdot E \left( B_m^{[s]} ;t \right).
	\end{equation}
	
	Our goal is to find the E-polynomials of $ H_m^{[k]}$ and  $B_m^{[k]}$ using equations (\ref{relation1}) and (\ref{relation2}).
	We will consider all $ H_m^{[k]}$ and  $B_m^{[k]}$, $m,k\in \N$, at the same time and we define following infinite matrices:
	
	$\mathcal{X}:=\begin{pmatrix}E\left( H^{[j]}_i; t\right)\end{pmatrix}_{i\ge 1,j\ge 0}$, $ \mathcal{B}:=\begin{pmatrix}E\left( B^{[j]}_i; t\right)\end{pmatrix}_{i\ge 1, j\ge 0}$, 
	$\mathcal{R}:=\begin{pmatrix}E\left( H^{[j,j+i]}; t\right)\end{pmatrix}_{i\ge 1, j\ge 0}$,
	\[\mathcal{G}:= \begin{pmatrix}E\left(Gr_i(\mathbb{C}^j);t\right)\end{pmatrix}_{i,j\ge 1}
	=\begin{pmatrix} {j \brack i}_{t} \end{pmatrix}_{i,j\ge 1} \text{ and }    \mathcal{A}:=\begin{pmatrix}E\left( \Xstar^{[j-i]}; t\right)\end{pmatrix}_{i,j\ge 1}.
	\]
	
	\begin{proposition}
		The matrices $\mathcal{X}, \mathcal{B}, \mathcal{R}, \mathcal{G}$ and $\mathcal{A}$ satisfy
		\[
		\mathcal{G}\mathcal{X}=\mathcal{R} \text{ and }
		\mathcal{B}\mathcal{A}=\mathcal{X}.
		\]
	\end{proposition}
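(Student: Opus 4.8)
The plan is to establish the two matrix identities entrywise, observing that each is simply the matrix repackaging of a scalar relation already proved above: $\mathcal{G}\mathcal{X}=\mathcal{R}$ encodes \eqref{relation1}, and $\mathcal{B}\mathcal{A}=\mathcal{X}$ encodes \eqref{relation2}. For the first identity I would read off the $(r,n)$-entry of the product. By the definitions of $\mathcal{G}$ and $\mathcal{X}$ the shared summation index is the integer $m$, which plays the double role of the dimension of the ambient space $\C^m$ of the Grassmannian and of the value $\mu(I)=m$, so that
\[
(\mathcal{G}\mathcal{X})_{r,n}=\sum_{m\ge 1}{m \brack r}_t\,E\left(\Hilbn_m;t\right).
\]
This is exactly the right-hand side of \eqref{relation1}, which the Grassmannian bundle structure \eqref{Grstructure} and the factorization property of the E-polynomial identify with $E\left(\Hilnr;t\right)=\mathcal{R}_{r,n}$.

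For the second identity I would similarly compute the $(m,n)$-entry of $\mathcal{B}\mathcal{A}$, where now the shared summation index is the multiplicity $s=\sigma(I)$ at the origin:
\[
(\mathcal{B}\mathcal{A})_{m,n}=\sum_{s} E\left(B^{[s]}_m;t\right)E\left(\Xstar^{[n-s]};t\right).
\]
This is precisely \eqref{relation2}, which follows from the support decomposition \eqref{Relation:DecomHnm} together with the multiplicativity of the E-polynomial, and hence equals $E\left(\Hilbn_m;t\right)=\mathcal{X}_{m,n}$.

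Since these are infinite matrices, the one genuine point to verify --- and the main (if modest) obstacle --- is that each product is well defined, i.e. that every entry is a finite sum, and that the index ranges are aligned so the products reproduce \eqref{relation1} and \eqref{relation2} term by term. For $\mathcal{G}\mathcal{X}$ finiteness is clear because ${m \brack r}_t=E(Gr_r(\C^m);t)$ vanishes for $m<r$ while $E\left(\Hilbn_m;t\right)$ vanishes for $m>\mu_n^{\max}$, which is finite by Proposition \ref{Prop:MuAndMaxideal}; for $\mathcal{B}\mathcal{A}$ it is clear because $E\left(\Xstar^{[n-s]};t\right)$ vanishes once $s>n$. The only delicate bookkeeping concerns the boundary terms: the $0$-point contributions ($\Xstar^{[0]}$ and $B^{[0]}$ are single points) must be incorporated so that the column index of $\mathcal{B}$ matches the row index of $\mathcal{A}$ and the diagonal term $E(\Xstar^{[0]};t)=1$ is counted; with this convention the sum in $(\mathcal{B}\mathcal{A})_{m,n}$ runs over $0\le s\le n$ and matches \eqref{relation2} exactly. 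Beyond this combinatorial bookkeeping there is nothing to prove, as all the geometric input is already in hand.
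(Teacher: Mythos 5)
Your proof is correct and takes essentially the same route as the paper's: both arguments simply expand the products $(\mathcal{G}\mathcal{X})_{ij}$ and $(\mathcal{B}\mathcal{A})_{ij}$ entrywise and identify the resulting sums with equations \eqref{relation1} and \eqref{relation2}, respectively. Your added checks --- that each entry sum is finite (via the vanishing of ${m \brack r}_t$ for $m<r$ and of $E\left(\Hilbn_m;t\right)$ for $m>\mu_n^{\max}$, and of $E\left(\Xstar^{[n-s]};t\right)$ for $s>n$) and that the index $s=0$ boundary terms are aligned --- are sound refinements of details the paper leaves implicit.
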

	\begin{proof}
		A direct calculation of matrix products gives
		\begin{align*}
		\mathcal{G}\mathcal{X}&=
		\begin{pmatrix}
		\sum_{k=1}^\infty \mathcal{G}_{ik}\mathcal{X}_{kj}
		\end{pmatrix}_{i\ge 1,j\ge 0}\\
		&=\begin{pmatrix}\sum_{k=1}^\infty E\left(Gr_i(\mathbb{C}^k);t\right)
		E\left( H_k^{[j]}; t\right)\end{pmatrix}_{i\ge 1, j\ge 0}\\
		\left(\mbox{by equation (\ref{relation1}) }\right)&=\begin{pmatrix}
		E\left( H^{[j,j+i]}; t\right)
		\end{pmatrix}_{i\ge 1, j\ge 0}=\mathcal{R}.
		\end{align*}
		Similarly, we have the product $\mathcal{B}\mathcal{A}$
		\begin{align*}
		\mathcal{B}\mathcal{A}&=
		\begin{pmatrix}
		\sum_{k=0}^\infty \mathcal{B}_{ik}\mathcal{A}_{kj}
		\end{pmatrix}_{i\ge 1,j\ge 1}\\
		&=\begin{pmatrix}
		\sum_{k=0}^\infty E\left( B^{[k]}_i; t\right)E\left( \Xstar^{[j-k]}; t\right)
		\end{pmatrix}_{i\ge 1,j\ge 1} \\
		\left(\text{by equation (\ref{relation2}) }\right) &= \begin{pmatrix}
		E\left( H^{[j]}_i; t\right)
		\end{pmatrix}_{i\ge 1, j\ge 0}	=\mathcal{X}.
		\end{align*} 
	\end{proof}

	By definition,  $\mathcal{G}$ and $\mathcal{A}$ are upper triangular matrices with 1s on the diagonal, so they are invertible with upper triangular inverses.
	Then the matrices $\mathcal{X}$ and $\mathcal{B}$ may be expressed as products of matrices
	\begin{equation}\label{matrixrela}
	\begin{cases}
	\mathcal{X}&=\mathcal{G}^{-1}\mathcal{R}\\
	\mathcal{B}&=\mathcal{X}\mathcal{A}^{-1}=\mathcal{G}^{-1}\mathcal{R}\mathcal{A}^{-1}
	\end{cases}
	\end{equation}
	
	Thus to compute the E-polynomials $E\left( \Hilbn_m; t\right)=\mathcal{X}_{m,n}$ and $E\left( B^{[n]}_m; t\right)=\mathcal{B}_{m,n}$, it is sufficient to find the inverse matrices of $\mathcal{G}$ and $\mathcal{A}$.
	
	{
		
		\begin{proposition}
			The matrix $\mathcal{G}=\begin{pmatrix} {j \brack i}_{t} \end{pmatrix}_{i,j\ge 1}$ has the inverse
			\begin{align*}
			\mathcal{G}^{-1}
			&=\begin{pmatrix}  (-1)^{j-i} t^{{j-i\choose 2}}  {j\brack i}_{t} \end{pmatrix}_{i,j\ge 1}.
			\end{align*}	
		\end{proposition}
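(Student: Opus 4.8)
The plan is to verify directly that the product of $\mathcal{G}$ with the claimed matrix is the identity. Write $\mathcal{H}$ for the candidate matrix with entries $\mathcal{H}_{ij}=(-1)^{j-i}t^{\binom{j-i}{2}}{j\brack i}_{t}$. Since ${j\brack i}_{t}=0$ whenever $i>j$, both $\mathcal{G}$ and $\mathcal{H}$ are upper triangular with diagonal entries ${i\brack i}_{t}=1$. Consequently, every entry of the product $\mathcal{G}\mathcal{H}$ is a \emph{finite} sum, both matrices are honestly invertible in the ring of upper-triangular infinite matrices with the same unit diagonal, and the inverse is unique; thus it is enough to check the identity on one side. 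I would first record the $(i,l)$-entry of the product:
\[
\left(\mathcal{G}\mathcal{H}\right)_{i,l}=\sum_{j=i}^{l}{j\brack i}_{t}\,(-1)^{l-j}t^{\binom{l-j}{2}}{l\brack j}_{t},
\]
and the goal becomes to show that this equals $\delta_{i,l}$.

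The key step is the \emph{subset-of-a-subset} identity for Gaussian binomial coefficients,
\[
{l\brack j}_{t}{j\brack i}_{t}={l\brack i}_{t}{l-i\brack j-i}_{t},
\]
which allows me to pull the factor ${l\brack i}_{t}$ out of the sum. Substituting $k=j-i$ and $m=l-i$, and using the symmetry ${m\brack k}_{t}={m\brack m-k}_{t}$, the remaining sum collapses to
\[
\left(\mathcal{G}\mathcal{H}\right)_{i,l}={l\brack i}_{t}\sum_{k=0}^{m}(-1)^{k}t^{\binom{k}{2}}{m\brack k}_{t}.
\]

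The final input is the finite $t$-binomial theorem $\sum_{k=0}^{m}(-1)^{k}t^{\binom{k}{2}}{m\brack k}_{t}z^{k}=\prod_{i=0}^{m-1}(1-zt^{i})$, specialized at $z=1$: the product $\prod_{i=0}^{m-1}(1-t^{i})$ vanishes for $m\ge 1$, since the $i=0$ factor is $1-t^{0}=0$, and equals the empty product $1$ for $m=0$. Hence the alternating sum equals $\delta_{m,0}=\delta_{i,l}$, and since ${l\brack i}_{t}=1$ when $i=l$, we obtain $\left(\mathcal{G}\mathcal{H}\right)_{i,l}=\delta_{i,l}$, proving $\mathcal{H}=\mathcal{G}^{-1}$. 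There is no serious obstacle here: the entire content reduces to two classical $q$-binomial identities, so the only genuine care needed is bookkeeping—tracking the index convention ${j\brack i}_{t}$ versus ${i\brack j}_{t}$ throughout, keeping the sign $(-1)^{l-j}$ and the exponent $\binom{l-j}{2}$ aligned under the change of variables, and correctly invoking the symmetry of the Gaussian binomials before matching the alternating sum to the $z=1$ specialization.
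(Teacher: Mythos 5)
Your proof is correct, and it is more self-contained than the paper's. The paper establishes this proposition by citing the orthogonality relation for Gaussian binomial coefficients from Comtet (\cite{CL}, p.~118--119), namely $\delta_{ij}=\sum_{k=i}^{j}(-1)^{k-i}t^{\binom{k-i}{2}}{j\brack k}_t{k\brack i}_t$, and then reading off the entries of $\mathcal{G}^{-1}$ by ``comparing coefficients'' --- a step whose honest justification is exactly the uniqueness of the inverse in the ring of upper-triangular matrices with unit diagonal, which the paper leaves implicit and you state explicitly. You instead derive that orthogonality relation from scratch: the subset-of-a-subset identity ${l\brack j}_t{j\brack i}_t={l\brack i}_t{l-i\brack j-i}_t$ pulls the factor ${l\brack i}_t$ out of the sum, and Gauss's finite binomial theorem specialized at $z=1$ annihilates the remaining alternating sum unless $l-i=0$. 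Notably, Gauss's formula is precisely the ingredient the paper itself invokes later in the proof of Theorem \ref{genEBmn} (Lemma \ref{KClem}, citing \cite{KC}), so your argument replaces the external citation by machinery already present in the paper. The only other difference is one of bookkeeping: you verify $\mathcal{G}\mathcal{H}=I$ (candidate on the right), whereas the paper's cited relation amounts to $\mathcal{H}\mathcal{G}=I$; your observation that triangularity with unit diagonal makes all products finite and inverses two-sided and unique is what makes the one-sided check sufficient, so this is harmless. Both routes are short; yours buys independence from the reference at the cost of two extra (classical) identities, and tightens the one slightly loose step in the paper's version.
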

		
		\begin{proof}
			Let $\mathcal{G}^{-1}$ denote the inverse of $\mathcal{G}$.
			Denoted by $\delta_{ij}$ the Kronecker delta function, the $ij$-th entry of the matrix product $\mathcal{G}^{-1}\mathcal{G}$ that is by definition
			\begin{equation}\label{Eq:Deltaij1}
			(\mathcal{G}^{-1}\mathcal{G})_{ij} = \sum_{k=1}^\infty \mathcal{G}^{-1}_{ik}\mathcal{G}_{kj}=
			\sum_{k=1}^{j} \mathcal{G}^{-1}_{ik}
			{j\brack k}_{t}=\delta_{ij}.
			\end{equation}
			We apply the following orthogonality relation for the q-binomial coefficients (\cite{CL} p.118-p.119): For every $0\le i\le j$ one has
			\begin{align*}\delta_{ij}
			=\sum_{k=i}^j (-1)^{k-i} q^{{k-i\choose 2}} {j\brack k}_q {k\brack i}_q
			\overset{\left(\text{{${k\brack i}_q=0$ if $k<i$}} \right)}{\scalebox{9}[1]{=}}
			&\sum_{k=1}^j (-1)^{k-i} q^{{k-i\choose 2}} {j\brack k}_q {k\brack i}_q,
			\end{align*} and we obtain 
			\begin{equation}\label{Eq:Deltaij2}
			\sum_{k=1}^{j} \mathcal{G}^{-1}_{ik}
			{j\brack k}_{t}=\delta_{ij}=\sum_{k=1}^j (-1)^{k-i} t^{{k-i\choose 2}} {j\brack k}_t {k\brack i}_t.
			\end{equation}
			By comparing the coefficients of ${j\brack k}_t$ in (\ref{Eq:Deltaij2}), we have
			\[
			\mathcal{G}^{-1}_{ik}= (-1)^{k-i} t^{{k-i\choose 2}} {k\brack i}_t.
			\]
		\end{proof}
	}
	Our next step is to calculate the inverse of $\mathcal{A}$. To this end, we first need the generating function of E-polynomials of 
	the Hilbert scheme of points on the punctured plane $\Xstar$.
	
	\begin{proposition}[\cite{Cheahcohomology,Cheah,Gobook}]\label{GenFunction:EC2*}
		The E-polynomial $E \left(\Xstar^{[n]}; t \right)$ of the Hilbert scheme of points on the punctured complex plane $\Xstar$ has the generating function
		\[\sum_{n=0}^\infty E \left(\Xstar^{[n]}; t \right)q^n =\prod _{d=1}^{\infty } \frac{1-t^{ d-1}q^d }{1- t^{ d+1}q^d }.\]
	\end{proposition}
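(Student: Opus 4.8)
The plan is to apply the generating-function formula of Cheah recalled above to the smooth quasi-projective surface $S=\Xstar=\X\setminus\{(0,0)\}$. Although $\Xstar$ is non-compact, Cheah's formula is motivic in nature and applies to any smooth surface; indeed, the paper already invokes it for the non-compact surface $\X$ to obtain \eqref{Formula:EpolyHn}. The only inputs the formula requires are the Hodge–Deligne numbers $e_{p,q}(\Xstar)=\sum_k(-1)^k h^{p,q;k}_c(\Xstar)$, so everything reduces to computing these.

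First I would record the E-polynomial of $\Xstar$. Applying the additivity of the E-polynomial to the decomposition $\X=\Xstar\cupast\{(0,0)\}$, together with $E(\X;u,v)=(uv)^2$ and $E(\{(0,0)\};u,v)=1$, gives
\[
E(\Xstar;u,v)=(uv)^2-1,
\]
exactly as in the Example above. Reading off the coefficients, the only nonzero values among the $e_{p,q}(\Xstar)$ are $e_{2,2}(\Xstar)=1$ and $e_{0,0}(\Xstar)=-1$.

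Next I would substitute these two values into Cheah's product. Every factor indexed by a pair $(p,q)$ with $e_{p,q}=0$ is trivial; what remains is the factor at $(p,q)=(2,2)$ raised to the exponent $+1$ and the factor at $(p,q)=(0,0)$ raised to the exponent $-1$. Because the $(0,0)$-factor carries a negative exponent, it contributes to the numerator, and one obtains
\[
\sum_{n=0}^\infty E(\Xstar^{[n]};u,v)\,s^n=\prod_{d=1}^\infty \frac{1-u^{d-1}v^{d-1}s^d}{1-u^{d+1}v^{d+1}s^d}.
\]
Finally, specializing $u=v=\sqrt{t}$, so that $u^{d-1}v^{d-1}=t^{d-1}$ and $u^{d+1}v^{d+1}=t^{d+1}$, and writing $q$ for the bookkeeping variable $s$ in accordance with the paper's conventions, yields the claimed identity.

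The computation is short, and the one point demanding care is the bookkeeping of the exponents $e_{p,q}$: the point removed from $\X$ enters with a negative contribution $e_{0,0}=-1$, and it is precisely this sign that turns the corresponding geometric-series factor into the numerator $1-t^{d-1}q^d$ rather than another denominator. A secondary point, should one wish to be scrupulous, is to confirm that Cheah's formula is genuinely valid for the open surface $\Xstar$ and not only for projective ones; this follows from the motivic and additive nature of the construction and is already implicitly relied upon in the paper's treatment of $\X$.
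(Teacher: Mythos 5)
Your proof is correct, but it takes a genuinely different route from the paper's. You apply Cheah's product formula directly to the open surface $\Xstar$, reducing everything to the Hodge--Deligne numbers $e_{2,2}(\Xstar)=1$, $e_{0,0}(\Xstar)=-1$ computed by additivity from $\X=\Xstar\cupast\{(0,0)\}$; the sign bookkeeping and the specialization $u=v=\sqt$ are all handled correctly (a quick check: the coefficient of $q^1$ in your product is $t^2-1=E(\Xstar;t)$, as it must be). The paper instead gives what it calls a ``direct proof'' that invokes Cheah's formula \emph{only} for $\X$ itself: it uses the decomposition $\Hilbn\simeq \bigcupast_{s=0}^{n}\Xstar^{[s]}\times B^{[n-s]}$ to write the known generating function $\prod_{d\ge 1}(1-t^{d+1}q^d)^{-1}$ as the product of the unknown series $\sum_n E(\Xstar^{[n]};t)q^n$ with $\sum_k E(B^{[k]};t)q^k$, then evaluates the latter via the Bia{\l}ynicki-Birula cell decomposition of $\Bn$ (so $E(B^{[k]};t)=P(B^{[k]};\sqt^{-1})t^{\dim_\C B^{[k]}}$), the homotopy equivalence $\Bn\simeq\Hilbn$, and the Ellingsrud--Str{\o}mme generating function $\sum_n P(\Hilbn;\sqt)q^n=\prod_{d\ge1}(1-t^{d-1}q^d)^{-1}$, and finally solves for the unknown factor. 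What your approach buys is brevity and uniformity---one theorem, one substitution---at the cost of needing Cheah's formula in its quasi-projective (compactly supported) form for a surface other than $\C^2$; you rightly flag this, and it is indeed legitimate (this is exactly the form proved in the cited references, which is why the paper states the proposition with those citations). What the paper's approach buys is self-containedness within its own toolkit: it rehearses exactly the decomposition-plus-known-series mechanism (the matrix $\mathcal{A}$ and its inverse) that drives the proof of Theorem \ref{genEBmn}, so the proposition doubles as a warm-up for the main argument.
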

	Here, we give a direct proof using the knowledge of $E\left(\Hilnr;t\right)$ and $E(\Bn;t)$.
	\begin{proof}
		First, we observe that from
	the decomposition in \ref{Relation:DecomHnm}: $\Hilbn\simeq \bigcupast_{s=0}^n \Xstar^{[s]}\times B^{[n-s]},$
	 we have a bijective morphism 
		\[\bigcupast_{s=0}^n \Xstar^{[s]}\times B^{[n-s]} \to \Hilbn\] by sending a pair of subschemes in $\Xstar^{[s]}\times B^{[n-s]}$ to the union of the two. 
		
    	We recall that the formula (\ref{Formula:EpolyHn}) for the generating function of the E-polynomials of $\Hilbn$ has the form \begin{equation}
			\sum_{n=0}^\infty E\left(\Hilbn ; t \right) q^n= \prod_{d=1}^{\infty} \frac{1}{1-t^{d+1}q^d}.
		\end{equation}
		Applying the motivicity of the E-polynomial to this decomposition \ref{Relation:DecomHnm}, we obtain the equality
		\begin{equation}\label{EpolyHnmotivic}
		\sum_{n=0}^\infty E\left(\Hilbn ;t \right) q^n= \sum_{n=0}^{\infty} \left( \sum_{s=0}^n E\left( \Xstar^{[s]}; t\right) E\left( B^{[n-s]};t \right) \right)q^n .
		\end{equation}
		After the change of variable $k=n-s$, the right-hand side of the equation ($(\ref{EpolyHnmotivic})$) has the form of a doubly infinite summation 
		\begin{align}\label{eq:EBntoPBn}
		 \sum_{n=0}^{\infty} \left( \sum_{s=0}^n E\left( \Xstar^{[s]}; t\right) E\left( B^{[n-s]};t \right) \right)q^n &\overset{\left(k=n-s \right)}{\scalebox{5}[1]{=}}
			\sum_{k=0}^{\infty} \left( \sum_{s=0}^\infty E\left( \Xstar^{[s]}; t\right) E\left( B^{[k]};t \right) \right)q^{s+k}\nonumber\\
		&=\sum_{k=0}^{\infty} \sum_{s=0}^\infty E\left( \Xstar^{[s]}; t\right)q^s E\left( B^{[k]};t \right) q^{k}\nonumber\\
		&\overset{\left(\text{$s$, $k$ are independent}\right)}{\scalebox{11}[1]{=}}
		\left(\sum_{k=0}^{\infty} E\left( B^{[k]};t \right) q^{k}\right) \left(\sum_{s=0}^\infty E\left( \Xstar^{[s]}; t\right)q^s\right)
		.\end{align}
		As we pointed out in the beginning of the section, the Briançon variety $B^{[k]}$ admits a cell decomposition as well and thus $E\left( B^{[k]};t \right)={P}\left( B^{[k]};\sqt^{-1} \right)t^{\dim_{\C}B^{[k]}}$.
		We replace pieces $E\left( B^{[k]};t \right)$ by ${P}\left( B^{[k]};\sqt^{-1} \right)t^{\dim_{\C}B^{[k]}}$ in equation (\ref{eq:EBntoPBn}):
		\begin{align*}
		&\left(\sum_{k=0}^{\infty} {P}\left( B^{[k]};\sqt^{-1} \right)t^{\dim_{\C}B^{[k]}} q^{k}\right)  \left(\sum_{s=0}^\infty E\left( \Xstar^{[s]}; t\right)q^s\right), \end{align*}
		which is equal to
		\begin{equation}\left(\sum_{k=0}^{\infty} P\left( H^{[k]}; \sqt \right) q^{k}\right)  \left(\sum_{s=0}^\infty E\left( \Xstar^{[s]}; t\right)q^s\right), \end{equation} since  $\Hilbn$ and $\Bn$ are homotopic.
		Recall that $P\left(\Hilbn; \sqt\right)$ has the generating function
		\begin{equation*}
		\sum_{n=0}^\infty P\left(\Hilbn; \sqt\right)q^n=\prod_{d=1}^{\infty} \frac{1}{1-t^{d-1}q^d}.
		\end{equation*} Thus we have
		\[\sum_{n=0}^\infty E\left(\Hilbn ;t \right) q^n=\prod_{d=1}^{\infty} \frac{1}{1-t^{d+1}q^d}=\left( \prod_{d=1}^{\infty} \frac{1}{1-t^{d-1}q^d} \right) \left(\sum_{s=0}^\infty E\left( \Xstar^{[s]}; t\right)q^s\right).
		\]
		Therefore
		\[\sum_{n=0}^\infty E \left(\Xstar^{[n]}; t \right)q^n =\prod _{d=1}^{\infty } \frac{1-t^{ d-1}q^d }{1- t^{ d+1}q^d }.\] 
	\end{proof}
	
	\begin{example} We list some E-polynomials
		$E \left(\Xstar^{[n]}; t \right)$ for $0\le n\le 8$: 
		\[\begin{array}{|c|c|}\hline
		n& E \left(\Xstar^{[n]}; t \right)\\\hline
		0 & 1 \\\hline
		1 & t^2-1 \\\hline
		2 & t^4+t^3-t^2-t \\\hline
		3 & t^6+t^5-2 t^3-t^2+t \\\hline
		4 & t^8+t^7+t^6-t^5-3 t^4+t^2 \\\hline
		5 & t^{10}+t^9+t^8-3 t^6-3 t^5+t^4+2 t^3 \\\hline
		6 & t^{12}+t^{11}+t^{10}+t^9-t^8-4 t^7-3 t^6+3 t^5+2 t^4-t^3 \\\hline
		7 & t^{14}+t^{13}+t^{12}+t^{11}-3 t^9-6 t^8-t^7+5 t^6+2 t^5-t^4 \\\hline
		8 & t^{16}+t^{15}+t^{14}+t^{13}+t^{12}-t^{11}-5 t^{10}-6 t^9+t^8+7 t^7+t^6-2 t^5 \\\hline
		\end{array}\]
	\end{example}
	
	Before stating the result about the matrix $\mathcal{A}^{-1}$, we define the \emph{dual E-polynomial} $\widecheck{E}$ of a complex variety $Z$
	\begin{equation}\label{def:EDual}
	\widecheck{E}(Z;t):= H(Z;\sqt,\sqt,-1).
	\end{equation}

	When $Z$ is a connected and smooth, we can compute $\widecheck{E}(Z;t)$ from ${E}(Z;t)$ by Poincaré duality (\ref{Eq:PD}):
	\begin{align*}
	\widecheck{E}(Z;t)=&E\left(Z;t^{-1}\right)t^{\dim_{\C} Z}.
	\end{align*}

	\begin{corollary}\label{Cor:DualEpolyXstar}
		The  dual E-polynomial of $\Xstar^{[n]}$ has generating function
		\[\sum_{n=0}^{\infty} \widecheck{E} \left(\Xstar^{[n]}; t \right)q^n =\prod _{d=1}^{\infty } \frac{1-t^{d+1}q^d }{1- t^{d-1}q^d }.\]
	\end{corollary}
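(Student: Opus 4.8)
The plan is to deduce the corollary directly from Proposition \ref{GenFunction:EC2*} by means of the Poincaré-duality relation for the dual E-polynomial recorded just above the statement. First I would note that $\Xstar=\C^2\setminus\{(0,0)\}$ is a smooth connected surface, so by Fogarty's theorem $\Xstar^{[n]}$ is a smooth, irreducible (hence connected) variety of complex dimension $2n$. Consequently the hypotheses of the displayed duality formula $\widecheck{E}(Z;t)=E(Z;t^{-1})\,t^{\dim_{\C}Z}$ are satisfied, and specializing to $Z=\Xstar^{[n]}$ gives the pointwise identity $\widecheck{E}(\Xstar^{[n]};t)=t^{2n}\,E(\Xstar^{[n]};t^{-1})$.

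Next I would assemble these term by term into the generating function and recognize the result as a substitution into the series of Proposition \ref{GenFunction:EC2*}. Writing $F(t,q):=\sum_{n\ge0}E(\Xstar^{[n]};t)q^n$, the factor $t^{2n}$ can be absorbed into the formal variable $q$, since
\[
\sum_{n=0}^{\infty}\widecheck{E}(\Xstar^{[n]};t)q^n
=\sum_{n=0}^{\infty}E(\Xstar^{[n]};t^{-1})\,(t^2q)^n
=F\!\left(t^{-1},\,t^2q\right).
\]
Plugging $t\mapsto t^{-1}$ and $q\mapsto t^2q$ into the product of Proposition \ref{GenFunction:EC2*} and simplifying the exponents in each factor, the numerator of the $d$-th factor becomes $1-t^{-(d-1)}(t^2q)^d=1-t^{d+1}q^d$ and the denominator becomes $1-t^{-(d+1)}(t^2q)^d=1-t^{d-1}q^d$, which yields exactly
\[
\prod_{d=1}^{\infty}\frac{1-t^{d+1}q^d}{1-t^{d-1}q^d},
\]
as claimed.

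This argument is essentially bookkeeping, so I do not expect a genuine obstacle; the only points requiring care are the justification that the duality formula applies (which rests on the connectedness and smoothness of $\Xstar^{[n]}$ and the identification $\dim_{\C}\Xstar^{[n]}=2n$) and the legitimacy of performing the substitution $q\mapsto t^2q$ at the level of formal power series, which is valid since the coefficient of $q^n$ on each side depends only on finitely many factors of the infinite product. I would also remark in passing that the resulting generating function is the formal reciprocal (under $q\mapsto q$, $t\mapsto t^{-1}$) of the one in Proposition \ref{GenFunction:EC2*}, which is the expected manifestation of Poincaré duality and provides a useful consistency check.
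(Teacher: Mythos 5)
Your proposal is correct and follows essentially the same route as the paper: apply Poincaré duality for the smooth variety $\Xstar^{[n]}$ to get $\widecheck{E}(\Xstar^{[n]};t)=t^{2n}E(\Xstar^{[n]};t^{-1})$, then absorb $t^{2n}$ into the substitution $q\mapsto t^2q$, $t\mapsto t^{-1}$ in the product of Proposition \ref{GenFunction:EC2*}. Your exponent bookkeeping matches the paper's computation exactly, so there is nothing to add.
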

	\begin{proof}
		Since the  Hilbert scheme $\Xstar^{[n]}$
		is a complex $4n$-dimensional smooth variety, the Poincaré duality  is compatible with the mixed Hodge structure on cohomology, and we have
		\[\widecheck{E} \left(\Xstar^{[n]}; t \right)= t^{\dim_\C \Xstar^{[n]}} E\left(\Xstar^{[n]}; t^{-1} \right)=t^{2n}E\left(\Xstar^{[n]}; t^{-1} \right). \]
		Then from the definition of the generating function of $\widecheck{E}$, we have 
		\begin{align*}
		\sum_{n=0}^{\infty} \widecheck{E} \left(\Xstar^{[n]}; t \right)q^n &=
		\sum_{n=0}^{\infty} E \left(\Xstar^{[n]}; t^{-1} \right) t^{2n}q^n 
		\overset{\left(\tilde{ q}=t^2q\right)}{\scalebox{4.5}[1]{=}}
		\prod _{d=1}^{\infty } \frac{1-t^{1 -d} \tilde q^d }{1- t^{- d-1}\tilde q^d }\\
		&= \prod _{d=1}^{\infty } \frac{1-t^{1-d} (t^2q)^d }{1- t^{-d-1} (t^2q)^d }
		= \prod _{d=1}^{\infty } \frac{1-t^{d+1} q^d }{1- t^{d-1} q^d }.
		\end{align*} 
	\end{proof}

	
	We are now ready to calculate $\mathcal{A}^{-1}$.
	
	\begin{proposition}
		The matrix $\mathcal{A}$ has the inverse
		\[\mathcal{A}^{-1}=\begin{pmatrix}\mathcal{A}^{-1}_{ij}\end{pmatrix}=\begin{pmatrix}\widecheck{E}\left(\Xstar^{[j-i]}\right)\end{pmatrix}\]
		\begin{equation}\label{Ainverse}
		\mathcal{A}^{-1}=\begin{pmatrix}
		1 &  \widecheck{E}\left(\Xstar^{[1]}\right)  &  \widecheck{E}\left(\Xstar^{[2]}\right) & \widecheck{E}\left(\Xstar^{[3]}\right) && \widecheck{E}\left(\Xstar^{[k]}\right) \\
		0& 1 &  \widecheck{E}\left(\Xstar^{[1]}\right)  &   \widecheck{E}\left(\Xstar^{[2]}\right) &  \dots  & \widecheck{E}\left(\Xstar^{[k-1]}\right)&\dots \\
		\vdots &0 & 1 &   \widecheck{E}\left(\Xstar^{[1]}\right) &  &\vdots\\
		&&  0&  1& &\widecheck{E}\left(\Xstar^{[k-j]}\right)\\
		&&  \vdots&  0& &\vdots\\
		&&  &  & &1\\
		\end{pmatrix},
		\end{equation}
		where $\widecheck{E}\left(\Xstar^{[j-i]}\right)= t^{2(j-i)} E\left(\Xstar^{[j-i]};t^{-1} \right)$ is the dual  E-polynomial of $\Xstar^{[j-i]}$.
	\end{proposition}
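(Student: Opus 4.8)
The plan is to exploit the fact that both $\mathcal{A}$ and the proposed inverse are \emph{upper-triangular Toeplitz matrices}: their $(i,j)$ entries depend only on the difference $j-i$, vanish for $j<i$ (since $\Xstar^{[k]}$ is empty for $k<0$), and equal $1$ on the diagonal (since $\Xstar^{[0]}$ is a point and hence $E(\Xstar^{[0]};t)=1$). For matrices of this shape, the entries of a product are governed by the Cauchy product of the associated generating sequences, so the whole computation reduces to multiplying two power series.

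Concretely, I would set $f(q):=\sum_{k\ge 0} E(\Xstar^{[k]};t)\,q^k$ and $g(q):=\sum_{k\ge 0}\widecheck{E}(\Xstar^{[k]};t)\,q^k$, which are exactly the generating functions computed in Proposition \ref{GenFunction:EC2*} and Corollary \ref{Cor:DualEpolyXstar}. Denoting by $\mathcal{A}'$ the matrix with entries $\mathcal{A}'_{ij}=\widecheck{E}(\Xstar^{[j-i]};t)$, I would compute
\[
(\mathcal{A}\mathcal{A}')_{ij}=\sum_{k}\mathcal{A}_{ik}\mathcal{A}'_{kj}
=\sum_{k=i}^{j} E(\Xstar^{[k-i]};t)\,\widecheck{E}(\Xstar^{[j-k]};t),
\]
the sum running over $i\le k\le j$ precisely because the two factors vanish outside this range. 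Substituting $m=j-i$ and $l=k-i$ turns this into $\sum_{l=0}^{m} E(\Xstar^{[l]};t)\,\widecheck{E}(\Xstar^{[m-l]};t)$, which is exactly the coefficient of $q^{m}$ in the product $f(q)g(q)$.

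The crux of the argument is then the observation that the two generating functions are reciprocals of one another. Indeed, from Proposition \ref{GenFunction:EC2*} and Corollary \ref{Cor:DualEpolyXstar},
\[
f(q)\,g(q)=\prod_{d=1}^{\infty}\frac{1-t^{d-1}q^d}{1-t^{d+1}q^d}\cdot\prod_{d=1}^{\infty}\frac{1-t^{d+1}q^d}{1-t^{d-1}q^d}=1,
\]
since the factors cancel termwise. Hence the coefficient of $q^m$ in $f(q)g(q)$ equals $\delta_{m,0}=\delta_{ij}$, which shows $\mathcal{A}\mathcal{A}'=\mathrm{Id}$. As both matrices are upper triangular with $1$'s on the diagonal, the symmetric computation (or triangularity alone) yields $\mathcal{A}'\mathcal{A}=\mathrm{Id}$ as well, identifying $\mathcal{A}'$ as $\mathcal{A}^{-1}$.

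I do not anticipate a serious obstacle: the only point requiring a little care is that these are genuinely infinite matrices, but the upper-triangular Toeplitz shape makes every entry of the product a \emph{finite} sum (the index $l$ runs only from $0$ to $m$), so no convergence question arises and the manipulation of generating functions is rigorous. The real content is entirely packaged in the reciprocity $f(q)g(q)=1$, which is already manifest from the two product formulas established above, and the rest is bookkeeping.
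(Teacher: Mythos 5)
Your proposal is correct and follows essentially the same route as the paper's own proof: both reduce the entries of $\mathcal{A}\mathcal{A}^{-1}$ to Cauchy-product coefficients via the change of index $l=k-i$, and both conclude from the termwise cancellation of the two infinite products (Proposition \ref{GenFunction:EC2*} and Corollary \ref{Cor:DualEpolyXstar}) that the product of generating functions equals $1$, forcing $(\mathcal{A}\mathcal{A}^{-1})_{ij}=\delta_{ij}$. Your explicit remarks on the Toeplitz structure and the finiteness of each entry sum merely make precise what the paper leaves implicit.
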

	\begin{proof}
		Denote by $\mathcal{C}$ be the infinite matrix in (\ref{Ainverse}).
		The $(i,j)$-th entry of the matrix $\mathcal{A}\mathcal{C}$ is given by the sum
		\begin{align*}\left(\mathcal{A}\mathcal{C}\right)_{ij}
		&=\sum_{k=i}^{j} E \left(\Xstar^{[k-i]}; t \right)
		\widecheck{E} \left(\Xstar^{[j-k]}; t \right)\\ 
		&=\sum_{l=0}^{j-i} E \left(\Xstar^{[l]}; t \right)
		\widecheck{E} \left(\Xstar^{[j-i-l]}; t \right).
		\end{align*}
		
		Note that Proposition \ref{GenFunction:EC2*} and Corollary \ref{Cor:DualEpolyXstar} yield the product of the generating functions 
		\begin{align*}
		\sum_{n=0}^{\infty}\left( \sum_{i=0}^{n}
		E \left(\Xstar^{[i]}; t \right) \widecheck{E}\left(\Xstar^{[n-i]}; t \right)
		\right) q^n=\left(\sum_{n=0}^{\infty} E \left(\Xstar^{[n]}; t \right)q^n\right) 
		\left(\sum_{n=0}^{\infty} \widecheck{E} \left(\Xstar^{[n]}; t \right)q^n\right)=1
		.
		\end{align*} 
		Therefore, $\left(\mathcal{A}\mathcal{C}\right)_{ij}=\delta_{ij}$ and this implies $\mathcal{C}=\mathcal{A}^{-1}$.

	\end{proof}
	
	We are now ready to prove Theorem \ref{genEBmn}.
	\begin{proof}[Proof of Theorem \ref{genEBmn}]
		The E-polynomial $E\left( B_m^{[n]};t\right)$ is equal to the $(m,n)$-th entry of the matrix product
		$\mathcal{B}=\mathcal{X}\mathcal{A}^{-1}=\mathcal{G}^{-1}\mathcal{R}\mathcal{A}^{-1}$ which is given by the sum
		\begin{align*}
		\sum_{k}\sum_{j}\mathcal{G}^{-1}_{mk}\mathcal{R}_{kj}\mathcal{A}^{-1}_{jn}=\sum_{k=m}^{\mu_{n}^{\max}} \sum_{j=\frac{k(k-1)}{2}}^{n}
		(-1)^{k-m}  t^{{k-m\choose 2}}   
		{k \brack m}_{t}
		E\left(H^{[j,j+k]}; t\right) \widecheck{E}\left(\Xstar^{[n-j]} ;t \right).
		\end{align*}
		Substituting this into the generating function, we obtain 
		\begin{align*}
		\sum_{n=0}^{\infty}E\left( B_m^{[n]};t\right) q^n &=
		\sum_{n=0}^{\infty} \left( \sum_{k=m}^{\mu_{n}^{\max}} \sum_{j=\frac{k(k-1)}{2}}^{n}
		(-1)^{k-m}  t^{{k-m\choose 2}}   
		{k \brack m}_{t}
		E\left(\Hil^{[j,j+k]}; t\right) \widecheck{E}\left(\Xstar^{[n-j]} ;t \right) \right)q^n.\end{align*}
		Here we can let the indices $k$ and $j$ run from 0 to $\infty$ without changing the infinite sum since the space $\Hil^{[j,j+k]}=\emptyset$ if $j\le \frac{k(k-1)}{2}$ by Proposition \ref{Prop:MuAndMaxideal}, and ${k \brack m}_{t}=0$ if $k\le m$. 
		Recall that the generating functions of $E\left(\Hilnr; t\right)$ and $ \widecheck{E}\left(\Xstar^{[n]} ;t \right)$ are given by 
		 $q^{\binom{r}{2}}\left(\prod_{d=1}^{\infty} \frac{1}{1-t^{(d+1)}q^d} \right)
		\left(\prod_{d=1}^{r} \frac{1}{1-t^{d} q^d} \right)$
		and
		$\prod _{d=1}^{\infty } \frac{1-t^{d+1}q^d }{1- t^{d-1}q^d }$, respectively.
		Then the generating function
		\begin{align*}
		\sum_{n=0}^{\infty}E\left( B_m^{[n]};t\right) q^n 
		&=
		\sum_{n=0}^{\infty} \left( \sum_{k=0}^{\infty} \sum_{j=0}^{\infty}
		(-1)^{k-m}  t^{{k-m\choose 2}}   
		{k \brack m}_{t}
		E\left(\Hil^{[j,j+k]}; t\right) \widecheck{E}\left(\Xstar^{[n-j]} ;t \right) \right)q^n\end{align*}
		is equal to the product  
		\begin{align}\label{Formula:EpolyforChiBnm}
		\prod_{d=1}^{\infty} \frac{1}{1-t^{d+1}q^d}\cdot&
	    \prod _{d=1}^{\infty } \frac{1-t^{d+1}q^d }{1- t^{d-1}q^d } \cdot
		\sum_{k=0}^{\infty}\left( q^{\binom{k}{2}} (-1)^{k-m}  t^{{k-m\choose 2}}   
		{k \brack m}_{t}
		\prod_{d=1}^{k} \frac{1}{1-t^{d} q^d} \right)\nonumber
		\\
		&= 
		 \prod _{d=1}^{\infty } \frac{1}{1- t^{d-1}q^d }\cdot
		\sum_{k=0}^{\infty}\left( (tq)^{\binom{k}{2}} (-1)^{k-m}  t^{-km+\binom{m+1}{2}}   
		{k \brack m}_{t} \frac{1}{(tq)_k}\right),
		\end{align} where $\displaystyle(tq)_k:=\prod_{d=1}^{k} \left(1-t^dq^d\right)$.
		To continue the proof, we need the following lemma.
		{ 
			\begin{lemma}\label{KClem}
				We have
				\begin{equation}\label{Eq:Lemma}
				\sum_{i=0}^{m}  (-1)^{m+i} t^{km-\binom{m}{2}+\binom{i}{2}-ik}  {m\brack i}_t
				= \prod_{i=0}^{m-1}\left(1-t^{k-i}\right).
			\end{equation}	
			\end{lemma}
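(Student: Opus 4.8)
The plan is to recognize \eqref{Eq:Lemma} as a reformulation of the finite $q$-binomial theorem (Gauss's formula), specialized at $q=t$. Recall that for a formal variable $x$ one has
\[
\prod_{j=0}^{m-1}\left(1-xt^{j}\right)=\sum_{i=0}^{m}(-1)^{i}t^{\binom{i}{2}}{m\brack i}_{t}\,x^{i}.
\]
First I would treat the right-hand side $\prod_{i=0}^{m-1}(1-t^{k-i})$ of \eqref{Eq:Lemma}: writing the general factor as $1-t^{k-m+1}t^{\,m-1-i}$ and reindexing the product, this product equals $\prod_{j=0}^{m-1}(1-xt^{j})$ under the specialization $x=t^{k-m+1}$. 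Substituting into the $q$-binomial theorem then gives
\[
\prod_{i=0}^{m-1}\left(1-t^{k-i}\right)=\sum_{i=0}^{m}(-1)^{i}t^{\binom{i}{2}+i(k-m+1)}{m\brack i}_{t}.
\]

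It then remains to show that the left-hand side of \eqref{Eq:Lemma} equals this same sum. For this I would apply the symmetry ${m\brack i}_{t}={m\brack m-i}_{t}$ together with the change of summation index $i\mapsto m-i$. Under this substitution the sign $(-1)^{m+i}$ becomes $(-1)^{2m-i}=(-1)^{i}$, the $q$-binomial coefficient is unchanged, and the exponent of $t$ transforms from $km-\binom{m}{2}+\binom{i}{2}-ik$ into $km-\binom{m}{2}+\binom{m-i}{2}-(m-i)k$. The only genuine computation is to check that this last exponent simplifies to $\binom{i}{2}+i(k-m+1)$; using the elementary identity $\binom{m-i}{2}=\binom{i}{2}+\binom{m}{2}-i(m-1)$ this is a short algebraic simplification. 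Matching the two sums term by term then yields the lemma.

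The main obstacle — though it is bookkeeping rather than conceptual — is precisely this exponent simplification after the index reversal, where one must keep careful track of the several quadratic terms $\binom{m}{2}$, $\binom{i}{2}$, $\binom{m-i}{2}$ and the linear contributions $ik$ and $(m-i)k$. Once the identity for $\binom{m-i}{2}$ above is in hand, everything collapses cleanly. Should the direct manipulation prove delicate, an alternative would be to verify that both sides are polynomials in $t$ of degree $mk-\binom{m}{2}$ agreeing at sufficiently many specializations, or to argue by induction on $m$ using the $q$-Pascal recurrence ${m\brack i}_{t}={m-1\brack i}_{t}+t^{m-i}{m-1\brack i-1}_{t}$; but the symmetry argument above is the most economical.
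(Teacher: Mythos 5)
Your proposal is correct and is essentially the paper's own argument: both proofs recognize \eqref{Eq:Lemma} as an instance of Gauss's finite $q$-binomial formula specialized at $q=t$. The only difference is bookkeeping — the paper substitutes $a=-t^{-k}$ and factors $(-1)^m t^{mk-\binom{m}{2}}$ out of the product, which produces the lemma's sum directly, whereas you substitute $x=t^{k-m+1}$ and then need the extra (valid) step of reversing the summation index via the symmetry ${m\brack i}_t={m\brack m-i}_t$.
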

			\begin{proof}[Proof of Lemma \ref{KClem}]
			Recall the Gauss's binomial formula (\cite{KC}, p.29):
			\begin{equation*}
			\displaystyle \prod_{{k=0}}^{{n-1}}(1+aq^{k})=\sum _{{k=0}}^{n}q^{\binom{k}{2}}{n \brack k}_{q}a^{k}.
			\end{equation*} 
			Applying the Gauss's binomial formula with $a=-t^{-k}, q=t$, we obtain
			\begin{align*}
			\prod_{i=0}^{m-1}\left(1-t^{k-i}\right)&=
			(-1)^mt^{mk-\binom{m}{2}}
			\prod_{i=0}^{m-1}\left(1+(-t^{-k})t^{i}\right)
			\\&=(-1)^mt^{mk-\binom{m}{2}}\sum_{i=0}^{m}t^{\binom{i}{2}}{m \brack i}_{t}(-t^{-k})^i
			\\&=\sum_{i=0}^{m}(-1)^{m+i}t^{mk-\binom{m}{2}+\binom{i}{2}-ik}{m \brack i}_{t}.
			\end{align*}
			\end{proof}
		}
		We continue the calculation of the generating function of $E\left( B_m^{[n]};t\right)$. 
		We write $\displaystyle{k \brack m}_{t}=\prod_{i=0}^{m-1} \frac{1-t^{k-i}}{1-t^{i+1}}$ and apply Lemma \ref{KClem} to the product $\prod_{i=0}^{m-1}\left(1-t^{k-i}\right)$.
		We substitute it into the generating function
		\begin{align}
		\sum_{n=0}^{\infty}E\left( B_m^{[n]};t\right) q^n 
		&=
	   \prod _{d=1}^{\infty } \frac{1}{1- t^{d-1}q^d } \cdot
		\sum_{k=0}^{\infty} \left((tq)^{\binom{k}{2}} (-1)^{k-m}  t^{-km+\binom{m+1}{2}}   
		{k \brack m}_{t}
		\frac{1}{(tq)_k}\right)\nonumber\\
		\begin{split}
		=&\prod _{d=1}^{\infty } \frac{1}{1- t^{d-1}q^d }\\&\times(-1)^{m}
		\sum_{k=0}^{\infty}  \frac{\displaystyle \frac{ (-1)^{k} (tq)^{\binom{k}{2}} }{(tq)_k} 
			t^{-km+\binom{m+1}{2}}  \sum_{a=0}^{m}  (-1)^{m+a} t^{km-\binom{m}{2}+\binom{a}{2}-ak}  {m\brack a}_t }{\displaystyle\prod_{i=0}^{m-1} \left(1- t^{i+1}\right)  } \nonumber
		\end{split}\\
		&=\prod _{d=1}^{\infty } \frac{1}{1- t^{d-1}q^d } \cdot \prod_{i=0}^{m-1} \frac{1}{1- t^{i+1}  } \cdot \sum_{a=0}^{m} \left( (-1)^{a} t^{m+\binom{a}{2}}  {m\brack a}_t
		\sum_{k=0}^{\infty} \frac{ (-1)^{k} (tq)^{\binom{k}{2}}  t^{-ak}}{(tq)_k}\right).\nonumber
		\end{align}
		Recall the Euler identity in (\cite{CL}):
		\begin{equation}\label{Formula:EulerIdentity}
		(z)_{\infty} = \sum_{n=0}^{\infty} \frac{\left(-1\right)^n z^n q^{\binom{n}{2}}}{\left(q\right)_n}=\prod_{n=0}^{\infty}\left( 1- z q^n \right).
		\end{equation}
		We apply the identity (\ref{Formula:EulerIdentity}) to the infinite sum $\displaystyle\sum_{k=0}^{\infty} \frac{ (-1)^{k} (tq)^{\binom{k}{2}}  t^{-ak}}{(tq)_k}$ with change of variables $\begin{cases}
		q\mapsto tq,\\ z\mapsto t^{-a}
		\end{cases}$.
		
		Finally, we arrive at the result:
		\begin{align*}
		\sum_{n=0}^{\infty}E\left( B^{[n]}_m; t \right)q^n 
		&= \prod _{d=1}^{\infty } \frac{1}{1- t^{d-1}q^d } \cdot \prod_{i=0}^{m-1} \frac{1}{1- t^{i+1}  }\cdot  \sum_{a=0}^{m}\left( (-1)^{a} t^{m+\binom{a}{2}}  {m\brack a}_t
		\prod_{k=0}^{\infty} (1-t^{-a} (tq)^k)\right)\\
		&= \prod_{i=0}^{m-1} \frac{1}{1- t^{i+1}  } \cdot\sum_{a=0}^{m}\left( (-1)^{a} t^{m+\binom{a}{2}}  {m\brack a}_t \left( 1- t^{0-1}\right)
		 \prod _{d=0}^{\infty } \frac{1-t^{d-a} q^d}{1- t^{d-1}q^d } \right)\\
		&= \prod_{i=1}^{m-1} \frac{1}{1- t^{i+1}  } \cdot\sum_{a=0}^{m} \left( (-1)^{a+1} t^{m-1+\binom{a}{2}}  {m\brack a}_t 
		 \prod _{d=0}^{\infty } \frac{1-t^{d-a} q^d}{1- t^{d-1}q^d } \right).
		\end{align*}
		Note that the infinite product $\displaystyle\prod _{d=0}^{\infty } \frac{1-t^{d-a} q^d}{1- t^{d-1}q^d } =0$ when $a=0$, and we have 
		\begin{equation*}
		\sum_{n=0}^{\infty}E\left( B^{[n]}_m; t \right)q^n =
		 \prod_{i=1}^{m-1} \frac{1}{1- t^{i+1}  } \cdot \sum_{a=1}^{m} \left((-1)^{a+1} t^{m-1+\binom{a}{2}}  {m\brack a}_t 
		 \prod _{d=0}^{\infty } \frac{1-t^{d-a} q^d}{1- t^{d-1}q^d } \right).
		\end{equation*} 
	\end{proof}

	\section{The E-polynomial of the refined strata $\Hilbn_m$}
	Using Theorem $\ref{genEBmn}$, we calculate the E-polynomial of the refined strata $\Hilbn_m$ as well.
	\begin{proposition}\label{genEXmn}
		The E-polynomial of the refined stratum $\Hilbn_m$ has the generating function
		\begin{align}
		\sum_{n=0}^{\infty}E\left(\Hilbn_m ;t\right) q^n&=
		\prod _{i=1}^{m-1} \frac{1}{1-t^{i+1}}\cdot
		\sum _{a=1}^m\left( (-1)^a t^{\binom{a}{2}+m} {m\brack a}_{t}  \prod _{k=0}^{\infty} \frac{1-q^k t^{k-a}}{1-q^k t^{k+1}}\right).
		\end{align}
	\end{proposition}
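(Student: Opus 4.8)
The plan is to mirror the proof of Theorem \ref{genEBmn}, simply dropping the final factor $\mathcal{A}^{-1}$. Since $E\left(\Hilbn_m;t\right)=\mathcal{X}_{m,n}$ and $\mathcal{X}=\mathcal{G}^{-1}\mathcal{R}$ by \eqref{matrixrela}, I would first read off
\[
E\left(\Hilbn_m;t\right)=\mathcal{X}_{m,n}=\sum_{k}\mathcal{G}^{-1}_{mk}\,\mathcal{R}_{kn}=\sum_{k=m}^{\infty}(-1)^{k-m}\,t^{\binom{k-m}{2}}{k\brack m}_{t}\,E\left(\Hil^{[n,n+k]};t\right),
\]
which is the analogue of the double sum in the proof of Theorem \ref{genEBmn} with the $\mathcal{A}^{-1}$-summation (the sum over $j$) removed. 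Substituting the generating function \eqref{genEpolyrelHnnr} for $E\left(\Hil^{[n,n+k]};t\right)$ with $r=k$ and passing to the generating function in $q$, I obtain the prefactor $\prod_{d\ge1}\left(1-t^{d+1}q^d\right)^{-1}$ — precisely the generating function of $E\left(\Hilbn;t\right)$ — in place of the simplified factor $\prod_{d\ge1}\left(1-t^{d-1}q^d\right)^{-1}$ that appeared in the $\mathcal{B}$-computation, the difference being exactly the missing $\widecheck{E}\left(\Xstar^{[\bullet]}\right)$ contribution.

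From here the algebra is verbatim the same as in Theorem \ref{genEBmn}. After factoring $q^{\binom{k}{2}}t^{\binom{k-m}{2}}=(tq)^{\binom{k}{2}}t^{-km+\binom{m+1}{2}}$, I would write ${k\brack m}_{t}=\prod_{i=0}^{m-1}\frac{1-t^{k-i}}{1-t^{i+1}}$ and expand the numerator $\prod_{i=0}^{m-1}(1-t^{k-i})$ via Lemma \ref{KClem}. Interchanging the $k$- and $a$-summations, the powers $t^{\pm km}$ cancel and the inner sum over $k$ becomes $\sum_{k\ge0}\frac{(-1)^k(tq)^{\binom{k}{2}}t^{-ak}}{(tq)_k}$, to which I apply the Euler identity \eqref{Formula:EulerIdentity} with $q\mapsto tq$ and $z\mapsto t^{-a}$, obtaining $\prod_{k\ge0}(1-t^{k-a}q^k)$. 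This yields
\[
\sum_{n=0}^{\infty}E\left(\Hilbn_m;t\right)q^n=\prod_{d=1}^{\infty}\frac{1}{1-t^{d+1}q^d}\cdot\prod_{i=0}^{m-1}\frac{1}{1-t^{i+1}}\cdot\sum_{a=0}^{m}\left((-1)^{a}t^{m+\binom{a}{2}}{m\brack a}_{t}\prod_{k=0}^{\infty}\left(1-t^{k-a}q^k\right)\right).
\]

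The only genuinely new bookkeeping — and the one place to be careful — is absorbing the prefactor. Using $\prod_{d\ge1}\left(1-t^{d+1}q^d\right)^{-1}=(1-t)\prod_{k\ge0}\left(1-t^{k+1}q^k\right)^{-1}$, the product $\prod_{k\ge0}(1-t^{k-a}q^k)$ combines into $(1-t)\prod_{k\ge0}\frac{1-t^{k-a}q^k}{1-t^{k+1}q^k}$, and the factor $(1-t)$ cancels the $i=0$ term of $\prod_{i=0}^{m-1}(1-t^{i+1})^{-1}$, turning it into $\prod_{i=1}^{m-1}(1-t^{i+1})^{-1}$. In contrast to the $\mathcal{B}$-case, where the analogous step contributed a factor $-t^{-1}$ and shifted the sign to $(-1)^{a+1}$ and the exponent to $t^{m-1}$, here the adjustment is a clean factor $(1-t)$, so the sign $(-1)^{a}$ and the exponent $t^{m+\binom{a}{2}}$ are unchanged. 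Finally, the $a=0$ summand vanishes because its $k=0$ numerator factor is $1-t^{0}q^{0}=0$, so the sum may start at $a=1$, giving exactly the claimed formula.
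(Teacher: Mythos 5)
Your proof is correct, but it is not the argument the paper gives for this proposition; it is instead precisely the alternative that the authors flag in the remark immediately following it. The paper's own proof never touches the matrices again: it uses relation (\ref{relation2}) (i.e.\ the decomposition \ref{Relation:DecomHnm}) to factor the generating function as the product
$\bigl(\sum_n E(\Xstar^{[n]};t)q^n\bigr)\cdot\bigl(\sum_n E(B^{[n]}_m;t)q^n\bigr)$,
then substitutes Proposition \ref{GenFunction:EC2*} and the already-proved Theorem \ref{genEBmn}, and finishes with the same kind of prefactor absorption you perform (there the telescoping factor $\frac{t^{-1}(1-t)}{1-t^{-1}}=-1$ flips $(-1)^{a+1}t^{m-1}$ back to $(-1)^{a}t^{m}$ and turns the denominators $1-t^{d-1}q^d$ into $1-t^{d+1}q^d$). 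You instead expand $\mathcal{X}_{m,n}=\sum_k\mathcal{G}^{-1}_{mk}\mathcal{R}_{kn}$ and rerun the Lemma \ref{KClem}/Euler-identity algebra of Theorem \ref{genEBmn} with the $\mathcal{A}^{-1}$ convolution deleted; all of your bookkeeping checks out: the indexing $\mathcal{G}^{-1}_{mk}\mathcal{R}_{kn}$ matches the paper's conventions, the exponent identity $q^{\binom{k}{2}}t^{\binom{k-m}{2}}=(tq)^{\binom{k}{2}}t^{-km+\binom{m+1}{2}}$ is right, the prefactor identity $\prod_{d\ge1}(1-t^{d+1}q^d)^{-1}=(1-t)\prod_{k\ge0}(1-t^{k+1}q^k)^{-1}$ is right, the clean $(1-t)$ (as opposed to $-t^{-1}(1-t)$ in the $\mathcal{B}$-case) correctly leaves the sign $(-1)^a$ and exponent $m+\binom{a}{2}$ untouched, and the $a=0$ term vanishes as you say. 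The trade-off: the paper's route is a three-line corollary of work already done, while yours is logically independent of Theorem \ref{genEBmn} — it needs neither $\mathcal{A}^{-1}$ nor the dual E-polynomials $\widecheck{E}(\Xstar^{[n]})$ of Corollary \ref{Cor:DualEpolyXstar} — and indeed one could reverse the paper's logic and deduce Theorem \ref{genEBmn} from your formula by multiplying by $\mathcal{A}^{-1}$; the cost is repeating the $q$-series manipulations essentially verbatim.
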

	\begin{remark}
		One can also obtain this formula directly from the entries of matrix products $\mathcal{X}=\mathcal{G}^{-1}\mathcal{R}$ in equations $(\ref{matrixrela})$ with a similar argument as in the proof of Theorem \ref{genEBmn}.
	\end{remark} 
	\begin{proof}
		From relation (\ref{relation2}), the E-polynomial of $\Hilbn_m$ is a sum
		\[
		E \left( \Hilbn_m ;t \right) =\sum_{s=0}^{n} E \left(( \Xstar)^{[n-s]} ;t \right) E \left( B_m^{[s]} ;t \right).\]
		Then the generating function of the E-polynomial $E \left( \Hilbn_m ;t \right)$
		\begin{align*}
		\sum_{n=0}^{\infty}E\left(\Hilbn_m; t\right) q^n=&
		\sum_{n=0}^{\infty}\left(\sum_{j=0}^{n} E \left(( \Xstar)^{[n-j]} ;t \right) E \left( B_m^{[j]} ;t \right) \right) q^n\\
		=&
		\left(	\sum_{n=0}^{\infty}E\left(\Xstar^{[n]}; t\right) q^n\right)\left(	\sum_{n=0}^{\infty}E\left(B^{[n]}_m; t\right) q^n\right).\end{align*}
		
		Applying the formula of the generating functions in $(\ref{GenFunction:EC2*})$ and $(\ref{genEBmn})$, we obtain
		\begin{align*}
		\sum_{n=0}^{\infty}E\left(\Hilbn_m; t\right) q^n=&
		\prod _{d=1}^{\infty } \frac{1-t^{ d-1}q^d }{1- t^{ d+1}q^d }\cdot\prod_{d=1}^{m-1} \frac{1}{1- t^{i+1}  }\cdot\sum_{a=1}^{m}\left( (-1)^{a+1} t^{m-1+\binom{a}{2}}  {m\brack a}_t \cdot
		 \prod _{d=0}^{\infty } \frac{1-t^{d-a} q^d}{1- t^{d-1}q^d } \right)\\
		=&\prod_{d=1}^{m-1} \frac{1}{1- t^{i+1}  }\cdot \sum_{a=1}^{m}\left( (-1)^{a+1} t^{m+\binom{a}{2}}  {m\brack a}_t 
		\cdot \frac{t^{-1}(1-t)}{1-t^{-1}}\cdot\prod _{d=0}^{\infty } \frac{1-t^{d-a} q^d}{1- t^{d+1}q^d } \right)\\
		=&\prod_{d=1}^{m-1} \frac{1}{1- t^{i+1}  }\cdot \sum_{a=1}^{m}\left(  (-1)^{a} t^{m+\binom{a}{2}}  {m\brack a}_t 
		\prod _{d=0}^{\infty } \frac{1-t^{d-a} q^d}{1- t^{d+1}q^d } \right).
		\end{align*}
		
	\end{proof}
	
	\begin{example}
		We list some examples of $E\left( \Hilbn_m;t \right)$:
		\begin{equation}\label{table1}	
		\begin{array}{|c|c|c| c|}
		\hline
		n & m=1 & m=2 & m=3 \\\hline
		1 & t^2-1 & 1 & 0 \\ \cline{1-1}
		2 & t^4+t^3-t^2-t & t^2+t & 0 \\\cline{1-1}
		3 & t^{6}+t^{5}-2 t^3-t^2+t & t^4+2 t^3+t^2-t-1 & 1 \\\cline{1-1}
		4 & t^{8}+t^{7}+t^{6}-t^{5}-3 t^4+t^2 & t^{6}+2 t^{5}+3 t^4-2 t^2-t & t^2+t \\\hline
		\end{array}\end{equation}	
	\end{example}
    \vspace*{0.5cm}
	
\section{Euler characteristics of the refined strata} 
		The specialization of $E\left(\Bn_m;t\right)$ at $t=1$ is the topological Euler characteristic of $\Bn_m$ \[\chi(\Bn_m)=E\left(\Bn_m;1\right),\]
		which equals to $\chi\left(\HilbCn_m\right)$.
		It is, however, difficult to obtain an explicit formula for the generating function of $\chi\left(\Bn_m\right)$ from formula (\ref{Formula:genEBmnthm}) in Theorem \ref{genEBmn}.
	    If we consider the specialization of the equation (\ref{Formula:EpolyforChiBnm}) in the proof of Theorem \ref{genEBmn}
	    \begin{align*}\sum_{n=0}^{\infty} E\left(\Bn_m;t\right)q^n= 
	     \prod _{d=1}^{\infty } \frac{1}{1- t^{d-1}q^d } \cdot
	    \sum_{k=0}^{\infty} \left((tq)^{\binom{k}{2}} (-1)^{k-m}  t^{-km+\binom{m+1}{2}}   
	    {k \brack m}_{t} \frac{1}{(tq)_k}\right)
        \end{align*}at $t=1$, we obtain
	     a formula for the generating function of  $\chi(\Bn_m)$:
	    \begin{equation}
	    \sum_{n=0}^{\infty}\chi\left( B_m^{[n]}\right) q^n
	    =\prod _{d=1}^{\infty } \frac{1}{1- q^d } \cdot
	    \sum_{k=0}^{\infty} \left(\frac{(-1)^{k-m}q^{\binom{k}{2}}}{(q)_k} 
	    \binom{k}{m}\right).
	    \end{equation} 
	    \begin{table}[h]
	    	\[\begin{array}{|c|cccc|}\hline
	    	n & m=2& m=3&m=4&m=5\\\hline
	    	0 & 0 & 0 & 0 & 0 \\
	    	1 & 1 & 0 & 0 & 0 \\
	    	2 & 2 & 0 & 0 & 0 \\
	    	3 & 2 & 1 & 0 & 0 \\
	    	4 & 3 & 2 & 0 & 0 \\
	    	5 & 2 & 5 & 0 & 0 \\
	    	6 & 4 & 6 & 1 & 0 \\
	    	7 & 2 & 11 & 2 & 0 \\\hline
	    	\end{array}\]
	    	\caption{Examples of $\chi\left( \Bn_m\right)$.}
	    \end{table}\vspace*{-0.5cm}
	    
	\newpage
	\appendix
	\section{Tables for E-polynomials of the refined strata}
	\begin{table}[h]\caption{E-polynomials of the refined strata of the punctual Hilbert schemes $E\left( B_m^{[n]};t \right)$.}
		\begin{tabular}{|c|>{$}p{3cm}<{$}|>{$}p{3cm}<{$}|>{$}p{3cm}<{$}|>{$}p{3cm}<{$}|}\hline
			$n $  & E\left(B_2^{[n]};t\right) & E\left(B_3^{[n]};t\right) &   E\left(B_4^{[n]};t\right) &  E\left(B_5^{[n]};t\right) \\\hline
			1	 & 1 & 0 & 0 & 0 \\\hline
			2	 & t+1 & 0 & 0 & 0 \\\hline
			3	 & t^2+t & 1 & 0 & 0 \\\hline
			4	 & t^3+2 t^2 & t+1 & 0 & 0 \\\hline
			5	 & t^4+2 t^3-t & 2 t^2+2 t+1 & 0 & 0 \\\hline
			6	 & t^5+3 t^4+t^3-t^2 & 2 t^3+3 t^2+t & 1 & 0 \\\hline
			7	 & t^6+3 t^5+t^4-2 t^3-t^2 & 3 t^4+5 t^3+3 t^2 & t+1 & 0 \\\hline
			8	 & t^7+4 t^6+2 t^5-2 t^4-t^3 & 3 t^5+7 t^4+4 t^3-t & 2 t^2+2 t+1 & 0 \\\hline
			9	 & t^8+4 t^7+3 t^6-3 t^5-2 t^4 & 4 t^6+9 t^5+7 t^4-2 t^2-t & 3 t^3+4 t^2+2 t+1 & 0 \\\hline
			10	 & t^9+5 t^8+4 t^7-3 t^6-3 t^5 & 4 t^7+12 t^6+10 t^5+t^4-3 t^3-2 t^2 & 4 t^4+6 t^3+4 t^2+t & 1 \\\hline
			11	 & t^{10}+5 t^9+5 t^8-4 t^7-5 t^6 & 5 t^8+15 t^7+15 t^6+2 t^5-5 t^4-4 t^3-t^2 & 5 t^5+10 t^4+7 t^3+3 t^2 & t+1 \\\hline
			12	 & t^{11}+6 t^{10}+7 t^9-3 t^8-6 t^7+t^5 & 5 t^9+18 t^8+19 t^7+4 t^6-8 t^5-7 t^4-2 t^3 & 7 t^6+14 t^5+12 t^4+5 t^3-t & 2 t^2+2 t+1 \\\hline
			13	 & t^{12}+6 t^{11}+8 t^{10}-4 t^9-9 t^8-t^7+t^6 & 6 t^{10}+22 t^9+27 t^8+7 t^7-10 t^6-11 t^5-4 t^4 & 8 t^7+20 t^6+18 t^5+9 t^4-2 t^2-t & 3 t^3+4 t^2+2 t+1 \\\hline
			14	 & t^{13}+7 t^{12}+10 t^{11}-3 t^{10}-11 t^9-2 t^8+2 t^7 & 6 t^{11}+26 t^{10}+34 t^9+12 t^8-13 t^7-16 t^6-6 t^5+t^3 & 10 t^8+26 t^7+27 t^6+13 t^5-5 t^3-3 t^2-t & 5 t^4+7 t^3+5 t^2+2 t+1 \\\hline
		\end{tabular}
	\end{table}
		From the above table, we observe that the E-polynomials of the strata $E\left( B^{[1]}_2 ;t\right)$, $E\left( B^{[3]}_3 ;t\right)$ and $E\left( B^{[6]}_4 ;t\right)$ are equal to $1$, which is the E-polynomial of a point. 
Indeed, each strata $B^{[1]}_2$, $B^{[3]}_3$ and $B^{[6]}_4$ contains a single monomial ideal:
\[\Yvcentermath1\Yboxdim{16pt}\begin{array}{ccc}
\gyoung(:y,~:x) & \gyoung(:y^2,~:<xy>,~~:<x^2>) &  \gyoung(:<y^3>,~:<y^2x>,~~:<x^2y>,~~~:<x^3>) \\
B^{[1]}_2=\{\maxideal\}& B^{[3]}_3=\{\lr x^2,xy,y^2\rr\} & B^{[6]}_4=\{\lr x^3,x^2y,y^2x,y^3\rr\}
\end{array}\]
	
	\clearpage

\end{document}